\documentclass[a4paper,12pt]{article}

\usepackage{amsfonts}
\usepackage{amsmath}
\usepackage{amssymb}
\usepackage{chngcntr}
\usepackage[dvipsnames]{xcolor}
\usepackage{enumerate}
\usepackage{faktor}
\usepackage[T1]{fontenc}
\usepackage{graphicx}
\usepackage{hyperref}
\usepackage{mathrsfs}
\usepackage{mathtools}
\usepackage{pxfonts}
\usepackage{soul}
\usepackage{stmaryrd}

\SetSymbolFont{stmry}{bold}{U}{stmry}{m}{n}

\newtheorem{definition}{Definition}
\newtheorem{fact}[definition]{Fact}
\newtheorem{comment}[definition]{Comment}
\newtheorem{lemma}[definition]{Lemma}
\newtheorem{proposition}[definition]{Proposition}
\newtheorem{theorem}[definition]{Theorem}
\newtheorem{corollary}[definition]{Corollary}
\newtheorem{example}[definition]{Example}

\counterwithin{definition}{section}
\counterwithin{equation}{definition}

\newcounter{proofcounter}

\renewcommand{\theequation}{%
	\ifnum\theproofcounter=0
		\arabic{section}\Alph{equation}%
	\else
		\arabic{section}.\arabic{definition}.\arabic{equation}%
	\fi}

\newenvironment{proof}{\setcounter{proofcounter}{1}\noindent\textbf{Proof} }{\hfill$\dashv$\vspace{0.2cm}\setcounter{proofcounter}{0}}

\newcommand{\cut}[1]{}

\newcommand{\qedspace}[1]{\vspace{-#1}}

\newcommand{\defstyle}[1]{\textbf{\boldmath #1}}

\newcommand{\ef}[3]{\mathrm{EF}_{#1\!}\left({#2},{#3}\right)}

\newcommand{\lang}{\mathcal{L}}

\newcommand{\for}[1]{\mathsf{#1}}

\newcommand{\res}[2]{{#1}{\upharpoonright}_{#2}}
\newcommand{\tres}[2]{{\tree{#1}}\setminus{#2}}

\newcommand{\eq}[1]{\equiv^{#1}}

\newcommand{\cfor}[3]{\llbracket(\tree{#1};#2)\rrbracket^{#3}}

\newcommand{\tree}[1]{\mathfrak{#1}}
\newcommand{\ls}[1]{<_{\tree{#1}}}
\newcommand{\lseq}[1]{\leqslant_{\tree{#1}}}
\newcommand{\gr}[1]{>_{\tree{#1}}}

\newcommand{\comp}[1]{\smile_{\tree{#1}}}

\newcommand{\double}[1][]{\ll_{\tree{#1}}}

\newcommand{\block}[3][]{\mathcal{B}_{#2}^{\,{#1}}\!\left(#3\right)}
\newcommand{\isablock}[2]{\for{block}^{\,{#1}}_{\,{#2}}}
\newcommand{\spec}[3]{\mathrm{spec}^{\,{#2}}_{\,{#1}}\!\left({#3}\right)}

\newcommand{\psubset}[2]{{#1}({#2})}
\newcommand{\psubtree}[2]{\tree{#1}({#2})}

\newcommand{\grset}[2]{{#1}^{>{#2}}}
\newcommand{\grtree}[2]{\tree{#1}^{>{#2}}}
\newcommand{\greqset}[2]{{#1}^{\geqslant{#2}}}
\newcommand{\greqtree}[2]{\tree{#1}^{\geqslant{#2}}}
\newcommand{\compset}[2]{{#1}^{\not>{#2}}}
\newcommand{\comptree}[2]{\tree{#1}^{\not>{#2}}}
\newcommand{\compeqset}[2]{{#1}^{\not\geqslant{#2}}}
\newcommand{\compeqtree}[2]{\tree{#1}^{\not\geqslant{#2}}}

\newcommand{\cfset}[2]{{#1}_{\textsf{\tiny cf}}^{\,{#2}}}
\newcommand{\cftree}[2]{\tree{#1}_{\textsf{\tiny cf}}^{{#2}}}

\newcommand{\opensecset}[3]{{#1}_{\textsf{\tiny cs}}^{\,({#2},\,{#3})}}
\newcommand{\opensectree}[3]{\tree{#1}_{\textsf{\tiny cs}}^{\,({#2},\,{#3})}}

\newcommand{\rightclosedsecset}[3]{{#1}_{\textsf{\tiny cs}}^{\,({#2},\,{#3}]}}
\newcommand{\rightclosedsectree}[3]{\tree{#1}_{\textsf{\tiny cs}}^{\,({#2},\,{#3}]}}

\newcommand{\sumtree}[3]{\tree{#1} \! +_{#3} \! \tree{#2}}

\allowdisplaybreaks

\begin{document}

\title{Bounded elementary extensions of trees with unbounded paths}
\author{\textsc{Ruaan Kellerman} \\ Department of Mathematics and Applied Mathematics, \\
	University of Pretoria, Pretoria, South Africa \\
	Email: \texttt{ruaan.kellerman@up.ac.za}}

\maketitle

\abstract{A tree is a partially ordered set that is downwards linear and downwards connected.  A tree is called bounded when each of its paths (i.e.\ maximal linearly ordered subsets) contains a greatest element.  In a bounded tree, each path can be defined by a first-order formula using the leaf of the path as parameter.  Bounded trees can be used to model computational systems such as Zeno machines whereby the leaf of a path represents the state to which an infinitely long sequence of computations converges, or a state that is assigned to a computational sequence that loops.  We identify a sufficient condition under which certain trees that are not bounded, can be elementarily embedded in trees that are bounded.  Several tree operations are also given, and Feferman-Vaught style preservation properties for these operations are proved.}

\bigskip

\textbf{Keywords:} tree, unbounded, bounded, path, branch, leaf, first-order

\bigskip

\textbf{Mathematics Subject Classification (2000):} 03C07, 03C64, 06A06

\section{Introduction}

A tree is a structure $\tree{T} := (T;<)$ in which $T$ is a non-empty set, the elements of which are called \defstyle{nodes}, and $<$ is a binary relation on $T$ that is irreflexive, downwards linear (i.e.\ for each $t \in T$, the set $\{x \in T : x \leqslant t\}$ is totally ordered by $<$) and downwards connected (i.e.\ for all $s,t \in T$ there exists $x \in T$ such that $x \leqslant s$ and $x \leqslant t$).\footnote{Observe that the class of trees can be defined by a finite set of first-order sentences.}  A maximal linearly ordered subset of $T$ will be called a \defstyle{path}.\footnote{Paths are commonly also referred to as \emph{branches} in the literature.}  A node will be called a \defstyle{leaf} when it is maximal with respect to $<$; the set of leaves can be defined by the formula
$$\for{leaf}(x) := \neg \exists y (x < y).$$
A node that is not a leaf will be called an \defstyle{internal} node.  A path will be called \defstyle{bounded} when it has a greatest element, otherwise it will be called \defstyle{unbounded}.\footnote{From \cite{GurevichShelahChapter}.}  A tree will be called \defstyle{bounded} when each of its paths is bounded.  The class of bounded trees is of natural interest.  In a bounded tree, each path can be defined by a first-order formula using the leaf of the path as parameter, which in effect permits for a form of restricted monadic quantification over paths in such trees.  Bounded trees can also be used to model computational systems such as Zeno machines whereby the leaf of a path represents the state to which an infinitely long sequence of computations converges, or a state that is assigned to a computational sequence that loops.

Prominent classes of trees of which the first-order theories have been determined, include the class of finitely branching trees (see \cite{Schmerl}), the class of well-founded trees (see \cite{Doets}), the class of finite trees (see \cite{BackofenRogersVijay-Shankar}), and the classes of trees of which all paths are isomorphic to the natural numbers, the integers, the rational numbers, and the reals (see \cite{GorankoKellermanJSL}).  Despite the class of bounded trees being of natural interest, its first-order theory is not known however.

Let $A$ be a path in $\tree{T}$ that is definable by a first-order formula, possibly using parameters, i.e.\ there exists a (possibly empty) tuple of nodes $\bar{a}$ from $\tree{T}$, and a first-order formula $\varphi(x,\bar{y})$ (where $\bar{y}$ is a tuple of variables of the same arity as $\bar{a}$) such that $\left\{ t \in T : (\tree{T};\bar{a}) \models \varphi(t,\bar{a}) \right\} = A$.  If $A$ has a greatest node, this fact can be expressed in $(\tree{T};\bar{a})$ by the first-order sentence $\exists x \left( \varphi(x,\bar{a}) \wedge \for{leaf}(x) \right)$.  Let $\for{path}_{\varphi}$ be a sentence\footnote{
	$\for{path}_{\varphi}$ can be taken as the sentence $\exists \bar{y} \left( \rho_1 \wedge \rho_2 \wedge \rho_3 \wedge \rho_4 \right)$, where the $\rho_i$ are the following sentences:\smallskip
	
	\ \ \ $\rho_1$: $\exists x (\varphi(x,\bar{y}))$ (the set defined by $\varphi$ is non-empty);\smallskip
	
	\ \ \ $\rho_2$: $\forall x \forall z \left( \left( \varphi(x,\bar{y}) \wedge \varphi(z,\bar{y}) \right) \rightarrow \left( (x < z) \vee (x = z) \vee (z < x) \right) \right)$ (the set defined by $\varphi$ is totally ordered);\smallskip
	
	\ \ \ $\rho_3$: $\forall x \forall z \left( \left( \varphi(x,\bar{y}) \wedge (z < x) \right) \rightarrow \varphi(z,\bar{y}) \right)$ (the set defined by $\varphi$ is downwards closed);\smallskip
	
	\ \ \ $\rho_4$: $\neg \exists x \forall z \left( \varphi(z,\bar{y}) \rightarrow (z < x) \right)$ (no node is greater than every node in the set defined by $\varphi$).\smallskip}
that expresses the fact that $\varphi(x,\bar{y})$ defines a path in $\tree{T}$.  If every definable path in $\tree{T}$ contains a leaf, this can be expressed by the axiom scheme that consists of the sentences $\forall \bar{y} \left( \for{path}_{\psi} \rightarrow \exists x \left( \psi(x,\bar{y}) \wedge \for{leaf}(x) \right) \right)$ for all formulas $\psi(x,\bar{y})$.  
Paths in a tree need not be definable by a first-order formula however, which makes the problem of determining the first-order theory of the class of bounded trees non-trivial.

We will describe two examples of this.  In the first example, we have a tree $\tree{B}_0$ with infinitely many unbounded paths that can be made into an elementarily equivalent bounded tree $\tree{B}_{\omega+1}$ by simply augmenting each unbounded path with a leaf, while in the second example, we encounter a tree $\tree{T}$ that also has infinitely many unbounded paths, but cannot be converted into an elementarily equivalent bounded tree by augmenting each unbounded path with a leaf, even though $\tree{T}$ itself is a model of the first-order theory of the class of bounded trees.

\begin{example} \label{Ex:BinaryTree}
	Consider the tree $\tree{B}_{\omega+1}$, depicted in Figure \ref{Fig:BinaryTree}, of which each path is isomorphic to the ordinal $\omega+1$, with each internal node having exactly two immediate successors, and with each leaf having no siblings (i.e.\ if $t$ and $s$ are leaves with $\{x : x < t\} = \{x : x < s\}$, then $t = s$).
	
	Nodes in $\tree{B}_{\omega+1}$ can be represented as finite binary strings, and paths can be represented as binary strings of denumerable length.  The cardinality of the set of paths in $\tree{B}_{\omega+1}$ is therefore $2^{\aleph_0}$, hence the cardinality of the set of leaves in $\tree{B}_{\omega+1}$ is also $2^{\aleph_0}$.
	\begin{figure}
		\begin{center}
			\includegraphics[scale=0.33]{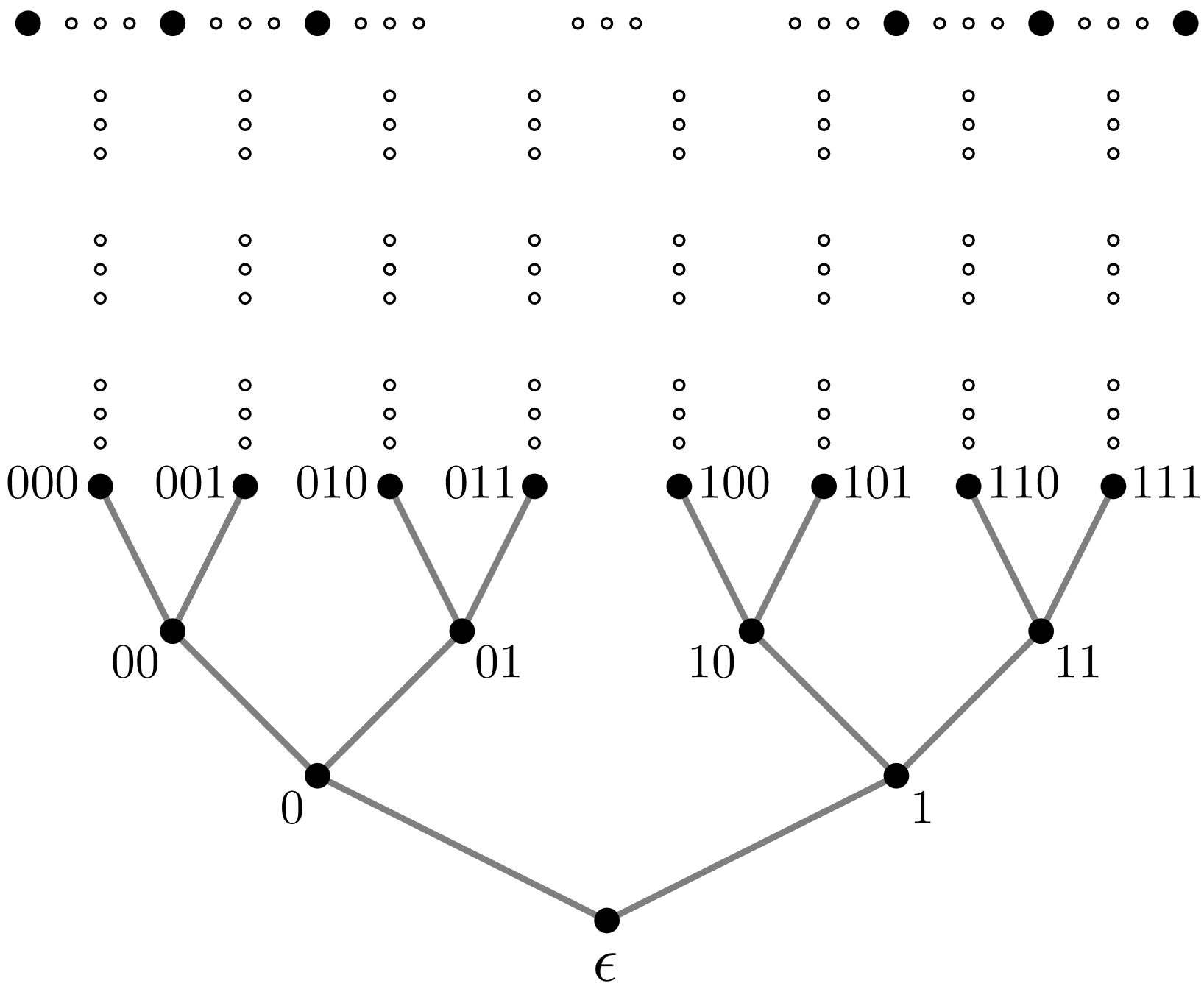}
			\caption{The tree $\tree{B}_{\omega+1}$ in Example \ref{Ex:BinaryTree}.}
			\label{Fig:BinaryTree}
		\end{center}
	\end{figure}
	
	By the Downards L\"owenheim-Skolem Theorem, $\tree{B}_{\omega+1}$ has a countable elementary substructure $\tree{B}_0$.  Each internal node in $\tree{B}_0$ must still have two immediate successors, and for each internal node $t$ in $\tree{B}_0$ there must exist a leaf $s$ in $\tree{B}_0$ for which $t < s$, so it follows that the cardinality of the set of leaves in $\tree{B}_0$ must be $\aleph_0$.  However, $\tree{B}_0$ still contains uncountably many paths.  $\tree{B}_0$ therefore contains two types of path: paths that are isomorphic to $\omega$, of which there are uncountably many, and paths that are isomorphic to $\omega+1$, of which there are countably infinitely many.  Remarkably, $\tree{B}_0$ has more unbounded paths that are isomorphic to $\omega$ than it has bounded paths that are isomorphic to $\omega+1$, even though $\tree{B}_0$ is elementarily equivalent to $\tree{B}_{\omega+1}$, in which all paths are bounded and isomorphic to $\omega+1$.
	
	Note that none of the paths in $\tree{B}_0$ that are isomorphic to $\omega$ are first-order definable, for suppose to the contrary that $A$ is a path in $\tree{B}_0$ that is isomorphic to $\omega$, and suppose that the formula $\varphi(x,\bar{y})$ defines $A$ in $\tree{B}_0$ with parameters $\bar{a}$.  Let $\psi$ be the sentence
	$$\for{path}_{\varphi(x,\bar{a})} \wedge \forall x \Big( \varphi(x,\bar{a}) \rightarrow \exists z \big( \varphi(z,\bar{a}) \wedge (x < z) \big) \Big).$$
	Then $(\tree{B}_0;\bar{a}) \models \psi$, and since $\tree{B}_0 \preceq \tree{B}_{\omega+1}$ then also $(\tree{B}_{\omega+1};\bar{a}) \models \psi$.  However, this contradicts the fact that every path in $\tree{B}_{\omega+1}$ is isomorphic to $\omega+1$.
\end{example}

\begin{example}
	In \cite[Example 4]{KellermanAML}, a tree $\tree{T}$ is constructed with the property that $\tree{T}$ is not bounded, even though $\tree{T}$ is a model of the first-order theory of the class of bounded trees.  Moreover, if $\widetilde{\tree{T}}$ is the tree that is obtained by augmenting any unbounded path in $\tree{T}$ with a leaf, then $\widetilde{\tree{T}} \not\equiv \tree{T}$.
\end{example}

This paper concerns itself with the problem of determining the first-order theory of the class of bounded trees.\footnote{This entails determining a consistent and decidable set of first-order sentences $\Sigma$ such that for each model $\tree{T}$ of $\Sigma$ and each natural number $n$, there exists a bounded tree $\tree{S}$ such that $\tree{T}$ and $\tree{S}$ satisfy the same first-order sentences of quantifier rank at most $n$.}  It is a continuation of the work done in \cite{KellermanAML}.  Specifically, we identify a sufficient condition under which a well-founded tree $\tree{T}$ of which each path is either a limit ordinal or an ordinal of the form $\alpha+1$, where $\alpha$ is a limit ordinal, and in which leaves have no siblings, can be elementarily embedded in a bounded tree $\tree{T}'$.  In fact, this bounded tree $\tree{T}'$ will be an end-extension of the original tree $\tree{T}$, obtained from $\tree{T}$ by simply augmenting unbounded paths with leaves.\footnote{$\tree{T}'$ is an \emph{end-extension} of $\tree{T}$ when $\tree{T}$ is a substructure of $\tree{T}'$ and, whenever $a$ is a node in $\tree{T}'$ and $b$ is a node in $\tree{T}$ with $a < b$, then $a$ is in $\tree{T}$ too.}  The assumptions of well-foundedness, of each path being either a limit ordinal or an ordinal of the form $\alpha+1$, and of leaves having no siblings, can be relaxed somewhat, but we use these stronger assumptions in the interest of brevity.\footnote{Well-foundedness is needed to ensure that a tree can be partitioned along a path into co-forests.  Well-foundedness can be replaced with the weaker assumption of \emph{weak branching completeness} (see \cite{GorankoKellermanZanardoII}), which allows for a tree to be partitioned along a path into so-called side forests.}

The paper is structured as follows.  In \emph{Section \ref{Sec:Preliminaries}: Preliminaries}, we review important definitions and results pertaining to trees and first-order logic that will be needed in the remainder of the paper.  In \emph{Section \ref{Sec:Operations}: Operations on trees}, we define some operations on trees that will be needed in subsequent constructions.  In \emph{Section \ref{Sec:Properties}: Properties of trees}, we define the sufficiency properties of focality and variegation that will be needed for our main result.  In \emph{Section \ref{Sec:Equivalence}: Equivalence of cross sections}, we obtain a result on the elementary equivalence of cross sections.  Finally, in \emph{Section \ref{Sec:Leaves}: Adding leaves to unbounded paths}, we obtain our main sufficiency result.

\section{Preliminaries} \label{Sec:Preliminaries}

\subsection{Trees}

The underlying set $T$ of a tree $\tree{T} = (T;\ls{T})$ will sometimes be indicated using the notation $|\tree{T}|$, and the order relation $\ls{T}$ will be written simply as $<$ provided there is no ambiguity.  Context permitting, it will be understood that $\tree{T}$ has underlying set $T$, $\tree{S}$ has underlying set $S$, etc.  $\tree{T}$ will be called \defstyle{trivial} when $T$ is a singleton set and $\ls{T}$ is the empty relation.

The language of $\tree{T}$ may sometimes be enriched by adding to it a tuple $\bar{a}$ of nodes from $T$ that are treated as constants, or a subset $A$ of $T$ that is treated as a unary relation, to obtain structures of the form $(\tree{T};\bar{a})$ and $(\tree{T};A)$ respectively.  At most finitely many constant symbols and unary relation symbols will be added to the language in this manner.  $\lang(\tree{T})$ will indicate the first-order language containing equality and the binary relation symbol $<$, along with all (if any) additional constant and unary relation symbols of $\tree{T}$.  By an $\lang$-tree, respectively $\lang$-formula, we mean a tree, respectively formula, in some language $\lang = \lang(\tree{S})$.

A subset $X$ of $T$ will be called a \defstyle{stem} when $(X;\res{<}{X})$ is a downwards convex (i.e.\ if $x \in X$ and $y < x$ then $y \in X$) linear order.  $X$ will be called a \defstyle{branch} when for some path $Y$, $X \subseteq Y$ and $Y \setminus X$ is a stem.

For $X,Y \subseteq T$, the notation $X \ll Y$ will be short for $x < y$ for all $x \in X$ and $y \in Y$; if e.g.\ $X = \{x\}$ then $X \ll Y$ will be written simply as $x \ll Y$, etc.  Intervals will carry their usual meanings, e.g.\ $(a,b] = \{ x \in T : a < x \leqslant b \}$ and $(-\infty,b) = \{ x \in T : x < b \}$.  For any subset $B$ of $T$, $\tres{T}{B}$ will denote the restriction of $\tree{T}$ to the set $T \setminus B$; if $B = \{b\}$ then $\tres{T}{B}$ will be written simply as $\tres{T}{b}$.  The comparability of two nodes can be expressed by the formula
$$x \smile y \ := \ (x < y) \vee (y < x) \vee (x = y).$$

\subsection{Logic}

Two structures will be called \defstyle{$n$-equivalent} when they satisfy the same first-order sentences of quantifier rank at most $n$.  $n$-equivalence of $\tree{A}$ and $\tree{B}$ will be denoted $\tree{A} \eq{n} \tree{B}$.  We will make use of the following ``back-and forth'' characterisation of $n$-equi\-va\-lence.

\begin{fact} \cite[Theorem 3.18 and Proposition 3.20]{DoetsBMT} \label{Thm:BackAndForth}
	Let $\tree{A}$ and $\tree{B}$ be structures of the same signature.  Then $\tree{A} \eq{n+1} \tree{B}$ if and only if both of the following conditions hold:
	\begin{enumerate}[(i)]
		\item
			for each $a \in A$ there exists $b \in B$ such that $(\tree{A};a) \eq{n} (\tree{B};b)$;
		\item
			for each $b \in B$ there exists $a \in A$ such that $(\tree{A};a) \eq{n} (\tree{B};b)$.
	\end{enumerate}
\end{fact}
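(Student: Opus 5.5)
The statement is the standard back-and-forth characterisation of $n$-equivalence, and I would prove it by induction-free reasoning on quantifier rank together with the finiteness of the signature.

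\textbf{Key fact.} For a fixed finite relational signature (here $<$, equality, and finitely many constants and unary predicates), there are, up to logical equivalence, only finitely many formulas of quantifier rank at most $n$ in any fixed finite set of free variables. This is proved by induction on $n$. The case $n = 0$ holds because there are finitely many atomic formulas, hence finitely many Boolean combinations up to equivalence. For the step, a formula of rank at most $n+1$ is a Boolean combination of atomic formulas and formulas $\exists x\,\psi$ with $\psi$ of rank at most $n$. There are finitely many such $\psi$ up to equivalence, so there are finitely many such Boolean combinations.

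Consequently, for each structure $(\tree{A};a)$ there is a single formula $\chi^n_{(\tree{A};a)}(x)$ of quantifier rank at most $n$, namely the conjunction of its finitely many rank-$n$ properties up to equivalence. This formula has the property that $(\tree{B};b) \models \chi^n_{(\tree{A};a)}(b)$ if and only if $(\tree{A};a) \eq{n} (\tree{B};b)$.

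\textbf{($\Rightarrow$)} Suppose $\tree{A} \eq{n+1} \tree{B}$ and let $a \in A$. The sentence $\exists x\,\chi^n_{(\tree{A};a)}(x)$ has rank at most $n+1$ and holds in $\tree{A}$. It therefore holds in $\tree{B}$, which yields a $b \in B$ with $(\tree{A};a) \eq{n} (\tree{B};b)$. Condition (ii) follows symmetrically.

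\textbf{($\Leftarrow$)} Assume (i) and (ii). Every sentence of rank at most $n+1$ is a Boolean combination of atomic sentences and sentences $\exists x\,\psi(x)$ with $\psi$ of rank at most $n$. So it suffices to handle these two kinds of sentence.

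\begin{enumerate}[(a)]
	\item \emph{Atomic sentences.} These involve only the added constants. Choose any $a \in A$ and use (i) to get a $b$ with $(\tree{A};a) \eq{n} (\tree{B};b)$. Since $n \geqslant 0$, the two structures agree on rank-$0$ sentences, so $\tree{A}$ and $\tree{B}$ agree on all atomic sentences. If a structure were allowed to be empty, that case would be trivial; here trees are non-empty by definition.
	\item \emph{Existential sentences.} If $\tree{A} \models \exists x\,\psi(x)$, pick a witness $a$ and take $b$ from (i). Then $(\tree{B};b) \models \psi(b)$, so $\tree{B} \models \exists x\,\psi(x)$. The converse direction uses (ii).
\end{enumerate}

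\textbf{Main obstacle.} The only real work is the finiteness lemma, specifically the bookkeeping of free variables: quantifying $x$ out of $\psi(x,\bar{y})$ requires the inductive hypothesis in one more variable. I would therefore state the lemma uniformly for all finite variable sets. Everything else is routine, and the result is also available directly from \cite{DoetsBMT}.
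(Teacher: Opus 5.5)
Your argument is correct. The paper gives no proof of its own. It imports the statement from Doets, where (as the pairing of Theorem 3.18 with Proposition 3.20 suggests) it follows from two ingredients. The first is the Ehrenfeucht--Fra\"iss\'e theorem: $\eq{n}$ coincides with Player II having a winning strategy in the $n$-round game. The second is the observation that an $(n+1)$-round game is a single round followed by an $n$-round game on $(\tree{A};a)$ and $(\tree{B};b)$.

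You work purely syntactically instead. For ($\Leftarrow$), you decompose rank-$(n+1)$ sentences into atomic sentences and sentences $\exists x\,\psi(x)$. For ($\Rightarrow$), you use the finiteness lemma to form the characteristic formula as the conjunction of the rank-$n$ type of $(\tree{A};a)$. These two ingredients are exactly Fact \ref{Thm:FiniteSpectrum} and Definition \ref{Def:CharacteristicFormula}, so within the paper you could cite them rather than reprove them.

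What your route buys is transparency about hypotheses. ($\Leftarrow$) holds for arbitrary signatures, and finiteness is used only in ($\Rightarrow$), where it is genuinely needed. For example, take infinitely many unary predicates $P_1,P_2,\dots$. Let $\tree{B}$ consist of elements $b_k$ satisfying exactly $P_1,\dots,P_k$, and let $\tree{A}$ be $\tree{B}$ plus one element satisfying every $P_i$. Then $\tree{A}\eq{1}\tree{B}$, but (i) fails for $n=0$. So your restriction to a finite relational signature matches the paper's standing convention rather than adding an assumption.

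The game route, in turn, delivers the Ehrenfeucht--Fra\"iss\'e characterisation itself. The paper needs that anyway, since Lemma \ref{Thm:SumsOfTrees} and Proposition \ref{Thm:EquivalentPaths} are argued via winning strategies for Player II.
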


We will use the terminology and notation of \cite[Section 1.6]{DoetsThesis} in relation to characteristic formulas.  The following fact is of relevance to the subsequent definition.

\begin{fact} \cite[Proposition 3.23]{DoetsBMT} \label{Thm:FiniteSpectrum}
	In a first-order language having at most finitely many constant symbols and relation symbols, and no function symbols, there are, for any natural numbers $n$ and $k$, only finitely many pairwise logically inequivalent formulas of quantifier rank $n$ having $k$ free variables.
\end{fact}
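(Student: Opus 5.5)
We argue by induction on $n$, simultaneously for all $k$. Fix the free variables to be among $x_1,\dots,x_k$; formulas with $k$ free variables differ from these only by renaming free variables, which does not affect the finiteness count. The claim to prove for each $n$ is: for every $k$, the set $F_{n,k}$ of formulas of quantifier rank at most $n$ whose free variables lie among $x_1,\dots,x_k$ falls into finitely many classes of logical equivalence. This gives the stated result, since formulas of rank exactly $n$ form a subset of $F_{n,k}$.

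\textbf{Base case $n=0$.} The atomic formulas over $x_1,\dots,x_k$ and the finitely many constant symbols are finitely many, say $m$. This uses that the signature has finitely many relation symbols, each of fixed arity, and no function symbols, so there are only finitely many terms. A quantifier-free formula is a Boolean combination of these atoms. It is therefore logically equivalent to a disjunctive normal form over the $m$ atoms, that is, to one of at most $2^{2^m}$ Boolean functions of the atoms. Hence $F_{0,k}$ has finitely many classes.

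\textbf{Inductive step.} Every formula in $F_{n+1,k}$ is a Boolean combination of two kinds of formula:
\begin{enumerate}[(a)]
\item formulas of $F_{n,k}$;
\item formulas $\exists y\,\psi$ with $\psi$ of rank at most $n$ and free variables among $x_1,\dots,x_k,y$.
\end{enumerate}
A universal quantifier is treated as $\neg\exists\neg$ and does not raise the rank.

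For (b), there are two cases.
\begin{itemize}
\item If $y$ is some $x_i$ with $i\le k$, or $y$ does not occur free in $\psi$, then $\exists y\,\psi$ has free variables among $x_1,\dots,x_k$ with $x_i$ left out. It is then covered after renaming, using fewer variables.
\item Otherwise, rename $y$ to the fresh variable $x_{k+1}$. This yields an equivalent formula $\exists x_{k+1}\,\psi'$ with $\psi'\in F_{n,k+1}$ of the same rank.
\end{itemize}
By the induction hypothesis $F_{n,k+1}$ has finitely many classes. If $\psi'\equiv\psi''$, then $\exists x_{k+1}\psi'\equiv\exists x_{k+1}\psi''$. So the formulas of type (b) also fall into finitely many classes.

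Choose one representative from each class of (a) and (b), giving a finite list $\theta_1,\dots,\theta_r$. Replacing subformulas by equivalent ones preserves equivalence. Hence every member of $F_{n+1,k}$ is equivalent to a Boolean combination of $\theta_1,\dots,\theta_r$, and there are at most $2^{2^r}$ such combinations up to equivalence. This completes the induction.

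\textbf{Main obstacle.} The only delicate point is the variable bookkeeping in (b): bound variables can be arbitrary, and a quantified variable may coincide with a free one. The renaming of bound variables must keep the rank unchanged and must stay within the fixed finite variable set $x_1,\dots,x_{k+1}$. This is exactly why the induction is carried out for all $k$ simultaneously.
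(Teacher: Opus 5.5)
Your proof is correct. The paper gives no proof of its own and only cites Doets, and your argument is the standard one behind that result: induction on $n$ uniformly in $k$, writing each formula of rank at most $n+1$ in $x_1,\dots,x_k$ as a Boolean combination of finitely many representatives of rank-$\le n$ formulas and of formulas $\exists x_{k+1}\psi$ with $\psi$ of rank at most $n$ in $x_1,\dots,x_{k+1}$. The one loose phrase, ``covered after renaming, using fewer variables'' in your first bullet, is harmless because that bullet is superfluous: the renaming in your second bullet already handles the cases $y=x_i$ and $y$ not free in $\psi$, once you pass to an alphabetic variant of $\psi$ so that $x_{k+1}$ is not captured.
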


\begin{definition} \cite[Definition 1.6.1]{DoetsThesis} \label{Def:CharacteristicFormula}
	Let $\tree{A}$ be a structure in a first-order language having at most finitely many constant symbols and relation symbols, and no function symbols.  Let $n$ be a natural number and let $\bar{a}$ be a $k$-tuple of elements from $\tree{A}$.  There exists a formula $\cfor{A}{\bar{a}}{n}(\bar{x})$, called the \defstyle{$n$-characteristic formula} of $\tree{A}$ with respect to $\bar{a}$, having the following properties:
	\begin{enumerate}[(i)]
		\item
			$\cfor{T}{\bar{a}}{n}(\bar{x})$ is a formula with $k$ free variables $\bar{x}$ and quantifier rank $n$.
		\item
			For any structure $\tree{B}$ in the same language as $\tree{A}$, and for any $k$-tuple $\bar{b}$ of elements from $\tree{B}$,
			$$\tree{B} \models \cfor{A}{\bar{a}}{n}(\bar{b}) \ \Longleftrightarrow \ (\tree{A};\bar{a}) \eq{n} (\tree{B};\bar{b}) \ \Longleftrightarrow \ \cfor{A}{\bar{a}}{n} \equiv \cfor{B}{\bar{b}}{n}.$$
			In particular, $(\tree{A};\bar{a}) \models \cfor{A}{\bar{a}}{n}(\bar{a})$.
	\end{enumerate}
\end{definition}

For a given tree language $\lang$, the formulas $\tau^n_1(x),\ldots,\tau^n_m(x)$ will indicate all of the $n$-characteristic $\lang$-formulas with one free variable, up to logical equivalence.

The $n$-round Ehrenfeucht-Fra\"iss\'e game (see e.g.\ \cite{DoetsBMT}) on $\tree{A}$ and $\tree{B}$ will be abbreviated $\ef{n}{\tree{A}}{\tree{B}}$.

\section{Operations on trees}  \label{Sec:Operations}

\subsection{Sums of trees}

Let $\tree{T}$ be a tree and $\tree{F}$ be a forest, and let $L$ be a path in $\tree{T}$.  The \defstyle{sum} of $\tree{T}$ and $\tree{F}$ with respect to $L$ is the tree $\sumtree{T}{F}{L}$ defined as follows:
\begin{itemize}
	\item
		the underlying set of $\sumtree{T}{F}{L}$ is $(T \times \{0\}) \cup (F \times \{1\})$;
	\item
		the order relation of $\sumtree{T}{F}{L}$ is defined as
		\begin{multline} \nonumber
			{<_{\sumtree{T}{F}{L}}} := \big\{ \big((x,0),(y,0)\big) : x \ls{T} y \big\} \cup \big\{ \big((x,1),(y,1)\big) : x \ls{F} y \big\} \ \cup \\
			\big\{ \big((x,0),(y,1)\big) : x \in L \ \text{and} \ y \in F \big\};
		\end{multline}
	\item
		for each constant $a$ in $\tree{T}$ (respectively $\tree{F}$), the element $(a,0)$ (respectively $(a,1)$) occurs as a constant in $\sumtree{T}{F}{L}$;
	\item
		for each relation $A$ in $\tree{T}$ (respectively $\tree{F}$), the relation $A \times \{0\}$ (respectively $A \times \{1\}$) occurs as a relation in $\sumtree{T}{F}{L}$.
\end{itemize}
The tree $\sumtree{T}{F}{L}$ is depicted in Figure \ref{Fig:Sum}.  Given $t \in T$ or $s \in F$ we will, provided no ambiguity arises, simply refer to the node $(t,0) \in \left|\sumtree{T}{F}{L}\right|$ as $t$, and to the node $(s,1) \in \left|\sumtree{T}{F}{L}\right|$ as $s$.  Similarly, for $A \subseteq T$ or $B \subseteq F$, the relation $A \times \{0\} \subseteq \left|\sumtree{T}{F}{L}\right|$ will be referred to simply as $A$, and $B \times \{1\} \subseteq \left|\sumtree{T}{F}{L}\right|$ will be referred to simply as $B$.
\begin{figure}
	\begin{center}
		\includegraphics[scale=0.8]{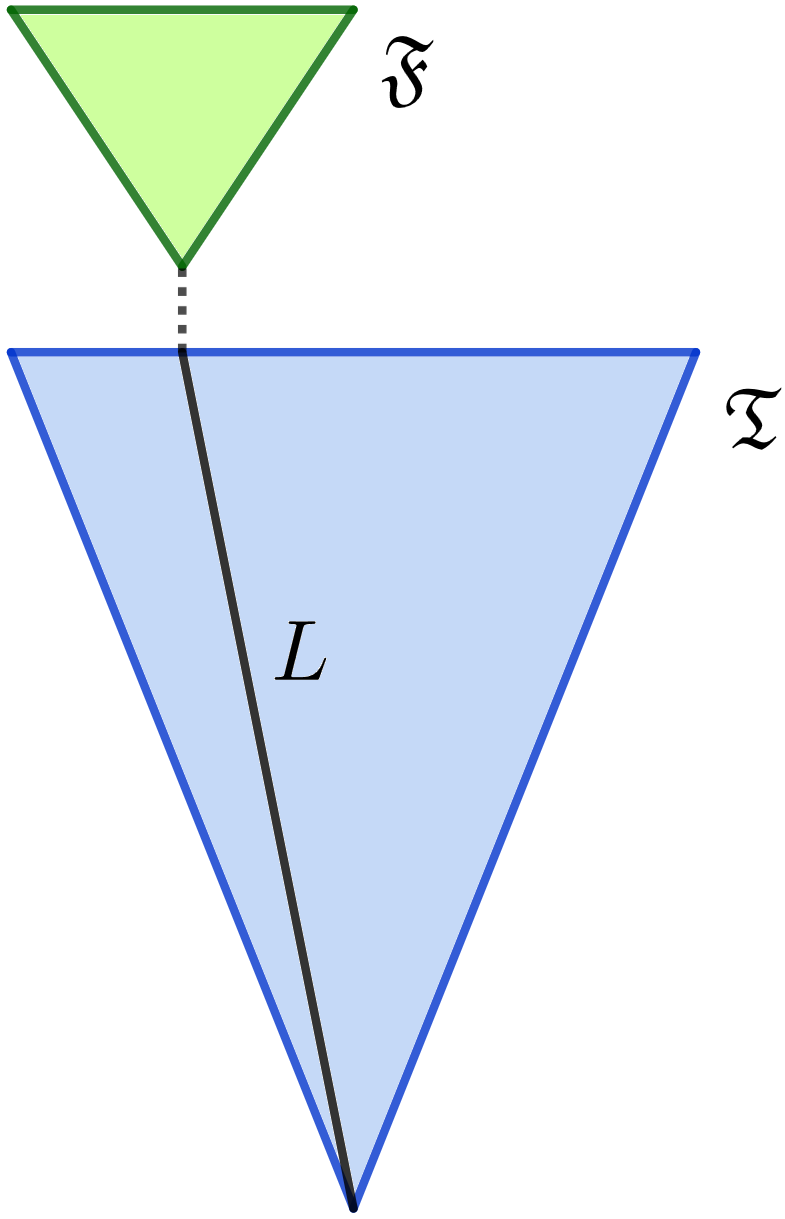}
		\caption{The sum $\sumtree{T}{F}{L}$.}
		\label{Fig:Sum}
	\end{center}
\end{figure}

\begin{lemma} \label{Thm:SumsOfTrees}
	Let $\tree{T}$ and $\tree{S}$ be $\lang_1$-trees and let $\tree{F}$ and $\tree{G}$ be $\lang_2$-forests.  Let $A$ and $B$ be paths in $\tree{T}$ and $\tree{S}$ respectively.  If $(\tree{T};A) \eq{n} (\tree{S};B)$ and $\tree{F} \eq{n} \tree{G}$ then $\left(\sumtree{T}{F}{A};A\right) \eq{n} \left(\sumtree{S}{G}{B};B\right)$.
\end{lemma}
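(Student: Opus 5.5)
We prove the statement by an Ehrenfeucht--Fra\"iss\'e argument, using the standard equivalence between $\eq{n}$ and Duplicator having a winning strategy in the $n$-round game (equivalently, repeated application of Fact \ref{Thm:BackAndForth}). We actually prove the stronger statement with parameters: for all tuples $\bar{a}$ from $T$, $\bar{b}$ from $S$, $\bar{c}$ from $F$, $\bar{d}$ from $G$, if $(\tree{T};A,\bar{a}) \eq{n} (\tree{S};B,\bar{b})$ and $(\tree{F};\bar{c}) \eq{n} (\tree{G};\bar{d})$, then
$$\left(\sumtree{T}{F}{A};A,\bar{a},\bar{c}\right) \eq{n} \left(\sumtree{S}{G}{B};B,\bar{b},\bar{d}\right).$$
The lemma is the case of empty tuples. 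The proof is by induction on $n$. Duplicator plays two games in parallel, $\ef{n}{(\tree{T};A)}{(\tree{S};B)}$ on the lower parts and $\ef{n}{\tree{F}}{\tree{G}}$ on the upper forests.

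\emph{Base case $n=0$.} We check that the atomic type of $(\bar{a},\bar{c})$ in the sum is determined by the atomic types of $\bar{a}$ in $(\tree{T};A)$ and of $\bar{c}$ in $\tree{F}$, together with the sort of each element. The relevant atomic facts are these:
\begin{itemize}
\item an equality $a_i = c_j$ never holds, since the two sorts are disjoint;
\item $c_j < a_i$ never holds;
\item $a_i < c_j$ holds iff $a_i \in A$, and membership in $A$ is part of the atomic type in $(\tree{T};A)$;
\item constants and unary relations of $\tree{T}$ and $\tree{F}$ sit in their own sorts, and the distinguished $A$ lies inside the $T$-part;
\item constants and relations of $\tree{T}$ are evaluated inside $T$ alone, and those of $\tree{F}$ inside $F$ alone.
\end{itemize}
We treat elements of each tuple as carrying their sort (the $0$/$1$ tags), so pebbles are always placed in matching sorts. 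Hence $0$-equivalence on each side yields $0$-equivalence of the sum.

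\emph{Inductive step.} Suppose Spoiler picks an element $x$ of the left sum.
\begin{itemize}
\item If $x \in T$, apply Fact \ref{Thm:BackAndForth} to $(\tree{T};A,\bar{a}) \eq{n+1} (\tree{S};B,\bar{b})$ to get $y \in S$ with $(\tree{T};A,\bar{a},x) \eq{n} (\tree{S};B,\bar{b},y)$. Since $\eq{n+1}$ implies $\eq{n}$, we also have $(\tree{F};\bar{c}) \eq{n} (\tree{G};\bar{d})$, and the induction hypothesis applies.
\item If $x \in F$, answer symmetrically within $G$.
\item If Spoiler picks in the right sum, argue symmetrically.
\end{itemize}
Fact \ref{Thm:BackAndForth} then gives $\eq{n+1}$ for the sums.

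\emph{Main obstacle.} The only real point is the base case: the cross-sort relation must be fully controlled by the unary predicate $A$. This is exactly why $A$ must be kept as a distinguished relation in the hypothesis $(\tree{T};A) \eq{n} (\tree{S};B)$. The hypothesis that $A$ is a path is not needed for this argument. The one bookkeeping issue is that the parameters must be tracked in the parameterised form of the statement, so that the induction goes through.
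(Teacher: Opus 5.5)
Your proof is correct and takes essentially the same route as the paper: Duplicator plays the games on $(\tree{T};A)$ versus $(\tree{S};B)$ and on $\tree{F}$ versus $\tree{G}$ in parallel, and the only cross-part atomic fact (a lower node lies below an upper node) is governed by membership in $A$, respectively $B$. Your induction with parameters via Fact \ref{Thm:BackAndForth} spells out this strategy-composition argument explicitly, whereas the paper states it directly in game form; you also check equality, constants and unary relations in the base case, which the paper leaves implicit.
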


\begin{proof}
	Consider the game $\ef{n}{\left(\sumtree{T}{F}{A};A\right)}{\left(\sumtree{S}{G}{B};B\right)}$.  Whenever Player I chooses a node from $T$ or $S$, Player II responds using her winning strategy for the game $\ef{n}{(\tree{T};A)}{(\tree{S};B)}$, and whenever Player I chooses a node from $F$ or $G$, Player II responds using her winning strategy for the game $\ef{n}{\tree{F}}{\tree{G}}$.  Let $t_k \in \left|\sumtree{T}{F}{A}\right|$ and $s_k \in \left|\sumtree{S}{G}{B}\right|$ be the nodes so chosen in the $k$-th round of the game.  Note that whenever $t_i \in T$, $t_j \in F$, $s_i \in S$ and $s_j \in G$,
	$$t_i <_{\sumtree{T}{F}{A}} t_j \ \Longleftrightarrow \ t_i \in A \ \Longleftrightarrow \ s_i \in B \ \Longleftrightarrow \ s_i <_{\sumtree{S}{G}{B}} s_j.$$
	It follows that $(\sumtree{T}{F}{A};A,t_1,\ldots,t_n)$ and $(\sumtree{S}{G}{B};B,s_1,\ldots,s_n)$ satisfy the same atomic sentences, as required.
\end{proof}

\subsection{Cones and anticones}

Let $\tree{T}$ be a tree and let $a$ be a node in $\tree{T}$.  Define the sets
\begin{eqnarray*}
	\grset{T}{a} & := & \{x \in T: a \ls{T} x\}, \\
	\greqset{T}{a} & := & \{x \in T: a \lseq{T} x\}, \\
	\compset{T}{a} & := & T \setminus \grset{T}{a} \ \ \ \text{and} \\
	\compeqset{T}{a} & := & T \setminus \greqset{T}{a}.
\end{eqnarray*}
The structures $\displaystyle \grtree{T}{a} := \res{\tree{T}}{\grset{T}{a}}$, $\displaystyle \greqtree{T}{a} := \res{\tree{T}}{\greqset{T}{a}}$, $\displaystyle \comptree{T}{a} := \res{\tree{T}}{\compset{T}{a}}$ and $\displaystyle \compeqtree{T}{a} := \res{\tree{T}}{\compeqset{T}{a}}$ will be called respectively the \defstyle{open cone} generated by $a$, the \defstyle{closed cone} generated by $a$, the \defstyle{closed anticone} generated by $a$, and the \defstyle{open anticone} generated by $a$.  These structures are depicted in Figures \ref{Fig:Cones} and \ref{Fig:Anticones}.

\begin{figure}
	\begin{center}
		\includegraphics[scale=0.8]{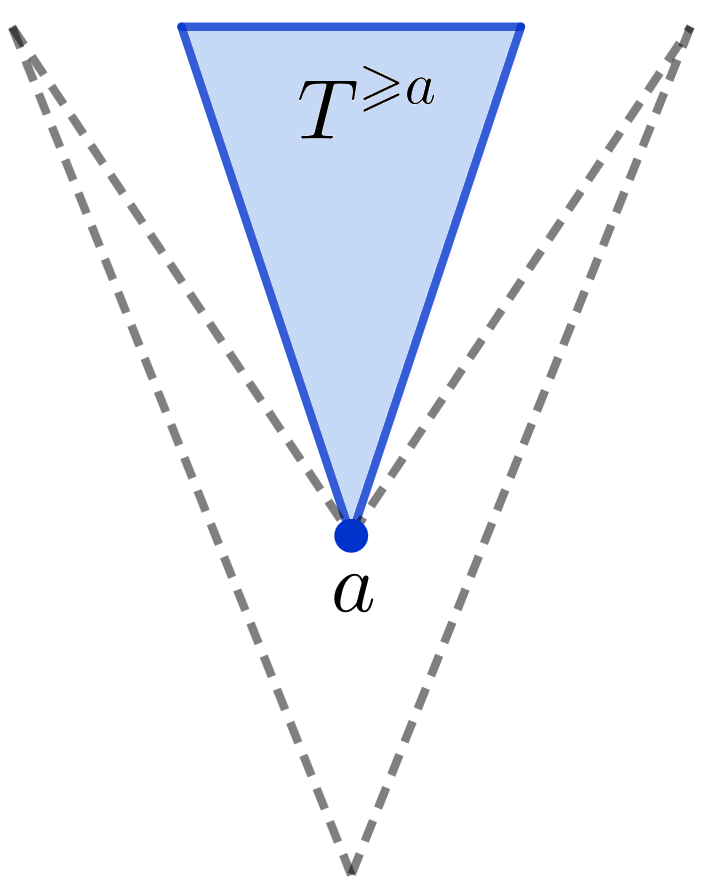} \hspace{5mm}
		\includegraphics[scale=0.8]{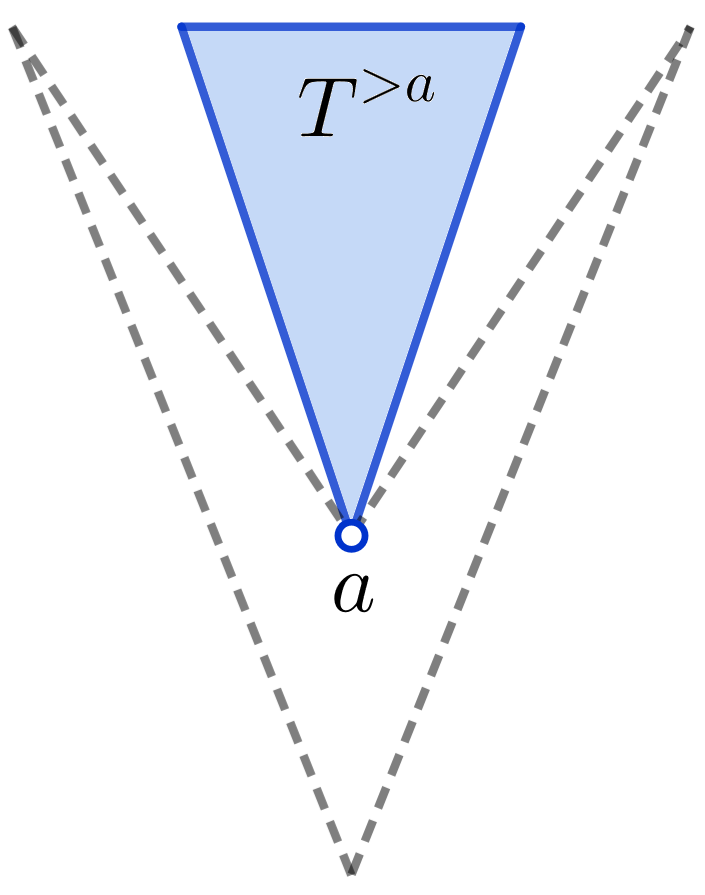}
		\caption{Cones generated by a node $a$ in a tree $\tree{T}$.}
		\label{Fig:Cones}
	\end{center}
\end{figure}

\begin{figure}
	\begin{center}
		\includegraphics[scale=0.8]{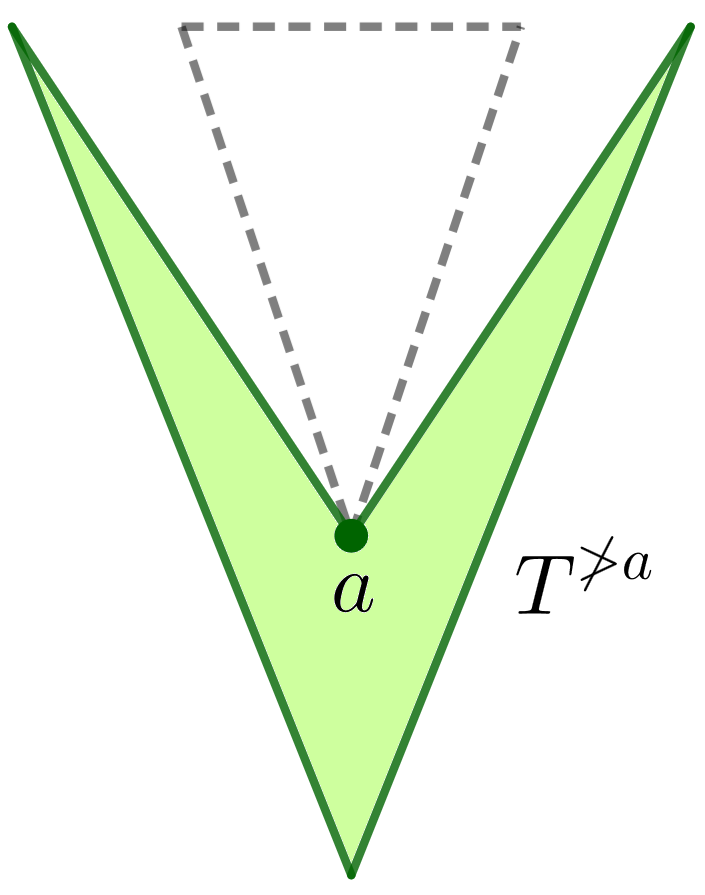} \hspace{5mm}
		\includegraphics[scale=0.8]{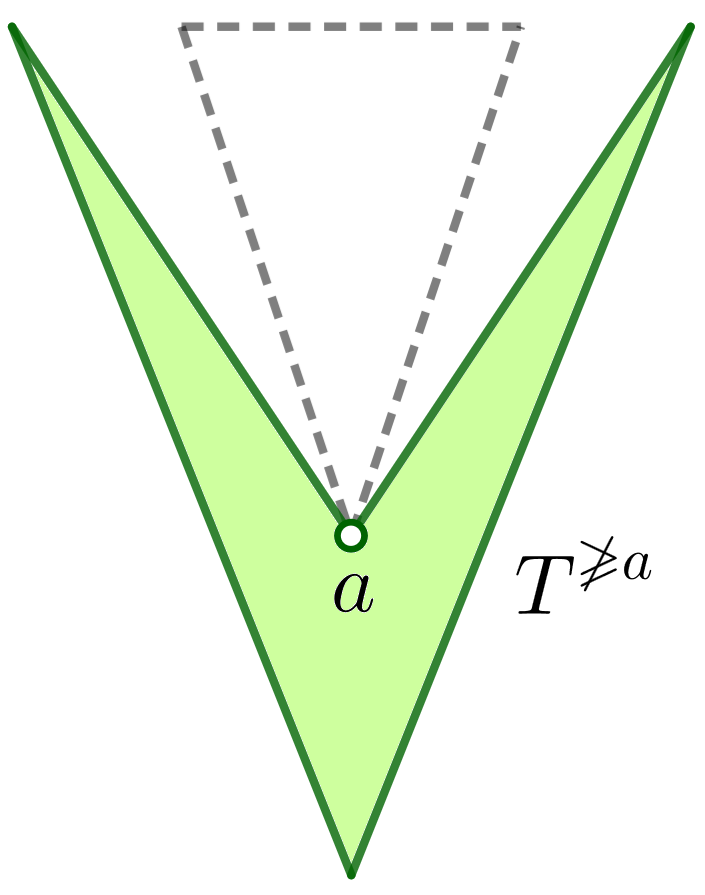}
		\caption{Anticones generated by a node $a$ in a tree $\tree{T}$.}
		\label{Fig:Anticones}
	\end{center}
\end{figure}

\begin{lemma} \label{Thm:ConeEquivalence}
	Let $\tree{T}$ and $\tree{S}$ be $\lang$-trees and let $a \in T$ and $b \in S$ such that $(\tree{T};a) \eq{n} (\tree{S};b)$.
	
	\medskip
	
	\noindent (i) \ \ $\grtree{T}{a} \eq{n} \grtree{S}{b}$ \hspace{5mm}
	(ii) \ \ $\left(\greqtree{T}{a};a\right) \eq{n} \left(\greqtree{S}{b};b\right)$ \hspace{5mm}
	(iii) \ \ $\left(\comptree{T}{a};a\right) \eq{n} \left(\comptree{S}{b};b\right)$

	\noindent (iv) $\left(\comptree{T}{a};(-\infty,a]\right) \eq{n} \left(\comptree{S}{b};(-\infty,b]\right)$ \hspace{5mm}
	(v) \ \ $\left(\compeqtree{T}{a};(-\infty,a)\right) \eq{n} \left(\compeqtree{S}{b};(-\infty,b)\right)$
\end{lemma}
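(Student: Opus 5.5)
The natural route is relativization: each of the five structures is a substructure of $\tree{T}$ (respectively $\tree{S}$) whose universe, and any distinguished unary relation, is first-order definable in $(\tree{T};a)$ from the parameter $a$. Concretely, $\grset{T}{a}$ is defined by $a<x$, $\greqset{T}{a}$ by $a<x \vee a=x$, $\compset{T}{a}$ by $\neg(a<x)$, and $\compeqset{T}{a}$ by $\neg(a<x)\wedge\neg(a=x)$. The intervals $(-\infty,a]$ and $(-\infty,a)$ are defined by $x<a\vee x=a$ and $x<a$. The same formulas, with $b$ in place of $a$, define the corresponding sets in $\tree{S}$.

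Given a sentence $\sigma$ of quantifier rank at most $n$ in the language of the relevant cone or anticone structure, form its relativization $\sigma^{\delta}$ to the defining formula $\delta(x,a)$. Every quantifier $\exists x\,\theta$ becomes $\exists x(\delta(x,a)\wedge\theta)$, and every $\forall x\,\theta$ becomes $\forall x(\delta(x,a)\rightarrow\theta)$. Any occurrence of the constant $a$ stays as $a$. Any occurrence of the unary relation $(-\infty,a]$ or $(-\infty,a)$ is replaced by its defining formula.

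Since all defining formulas above are quantifier-free, relativization does not increase quantifier rank. So $\sigma^{\delta}$ is a sentence of $\lang(\tree{T};a)$ of rank at most $n$. By construction, $\grtree{T}{a}\models\sigma$ iff $(\tree{T};a)\models\sigma^{\delta}$, and similarly for the other four structures, with $\tree{S},b$ in place of $\tree{T},a$. Then $(\tree{T};a)\eq{n}(\tree{S};b)$ gives $(\tree{T};a)\models\sigma^\delta$ iff $(\tree{S};b)\models\sigma^\delta$, which yields each of (i)–(v).

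If the paper prefers games, the same argument works via $\ef{n}{\cdot}{\cdot}$. Player II copies her winning strategy from $\ef{n}{(\tree{T};a)}{(\tree{S};b)}$. This is legitimate because partial isomorphisms that preserve $a$ and $b$ map a node satisfying $a<x$ (and so on) to one satisfying $b<y$. Hence responses stay inside the relevant substructure, and the final position preserves the order, the constants and the added relations.

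The only step needing care is checking that relativization keeps rank at most $n$ and handles the constants correctly. For (i) and (v), $a$ itself is not in the universe, so no constant for it appears in the target language, which is consistent with the statements. I expect no genuine obstacle. The $\lang$-tree language may already contain other constants or unary relations; these are handled by intersecting with the new universe, which is automatic under relativization. One caveat is that constants of $\lang$ lying outside the cone would make the substructure ill-defined; I would assume, as the statement implicitly does, that $\lang$ has no such extra constants, or that they lie in the universe.
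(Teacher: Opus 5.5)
Your proposal is correct. The paper's proof is only the remark that the lemma follows by straightforward Ehrenfeucht--Fra\"iss\'e games, and that is your second paragraph. Because the final position of a play in $\ef{n}{(\tree{T};a)}{(\tree{S};b)}$ must be a partial isomorphism extending $a\mapsto b$, every response of a winning strategy already preserves the atomic relations with $a$ and $b$. Hence the responses stay inside the corresponding cone or anticone and respect the intervals $(-\infty,a]$ and $(-\infty,a)$.

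Your main argument relativises to the quantifier-free formulas $a<x$, $a\leqslant x$, $\neg(a<x)$, $\neg(a\leqslant x)$, and substitutes $x\leqslant a$ and $x<a$ for the added predicates. This is the syntactic counterpart of the game argument. It is the same device the paper uses in Lemmas \ref{Thm:SideForestEquivalence} and \ref{Thm:CrossSectionEquivalence}. It also makes the rank bookkeeping explicit: nothing is lost here because the defining formulas are quantifier-free, whereas in those lemmas the rank-$1$ formulas $\for{cf}_z$ and $\for{ocs}_{y,z}$ cost one quantifier.

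Your caveat about constants of $\lang$ is apt. The paper tacitly assumes they lie in the relevant universe, which is how the lemma is later applied (e.g.\ to $(\greqtree{T}{d};t)$ with $d\leqslant t$). Whether a constant lies in a given cone or anticone is decided by atomic facts involving $a$. So $0$-equivalence of $(\tree{T};a)$ and $(\tree{S};b)$ guarantees that this assumption holds in $\tree{T}$ exactly when it holds in $\tree{S}$.
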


\begin{proof}
	By the straightforward use of Ehrenfeucht-Fra\"iss\'e games.
\end{proof}

\subsection{Co-forests}

We next define co-forests of nodes, which will be used to partition well-founded trees in a natural way; these substructures are depicted in Figure \ref{Fig:Co-Forest}.  Given $a \in T$, define
$$\cfset{T}{a} := \left(\bigcap_{t \ls{T} a} \greqset{T}{t} \right) \setminus \grset{T}{a}.$$
The \defstyle{co-forest} of $a$ is the structure $\cftree{T}{a} := \res{\tree{T}}{\cfset{T}{a}}$.

\begin{figure}
	\begin{center}
		\includegraphics[scale=0.18]{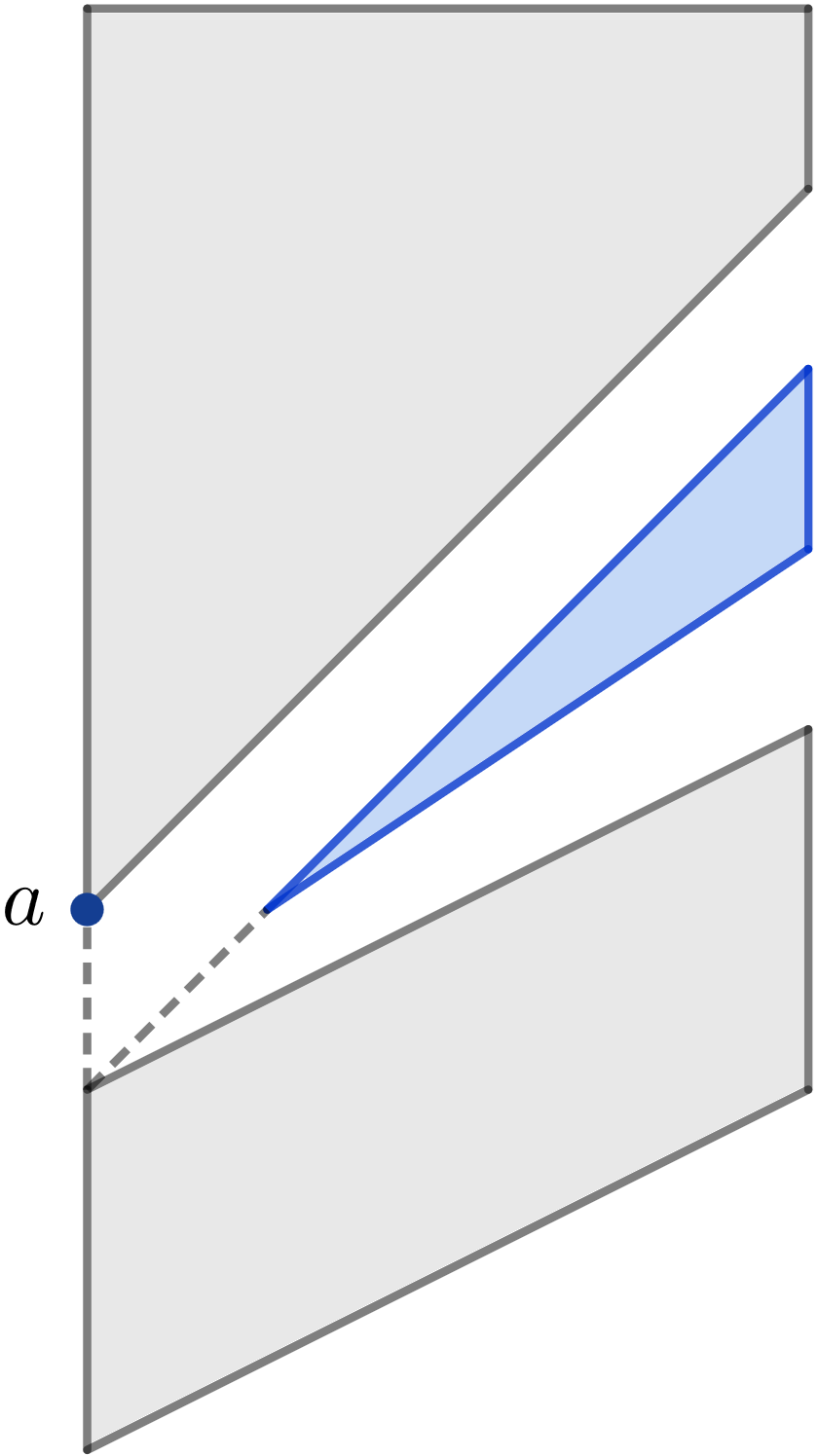}
		\caption{The set $\cfset{T}{a}$ is depicted in blue.}
		\label{Fig:Co-Forest}
	\end{center}
\end{figure}

\begin{proposition} \label{Thm:Partition}
	Let $\tree{T}$ be a well-founded tree and let $A$ be a path in $\tree{T}$.  For each $b \in T$ there exists $a \in A$ such that $b \in \cfset{T}{a}$.
\end{proposition}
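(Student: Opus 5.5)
We need the node $a \in A$ whose co-forest contains $b$. Unwinding the definition, $b \in \cfset{T}{a}$ means two things: every $t < a$ satisfies $t \leqslant b$, i.e.\ $(-\infty,a) \subseteq (-\infty,b]$; and $a \not< b$.

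The plan is to take $a$ to be the least element of $A$ that is not below $b$, namely the least element of
$$X := \{x \in A : x \not< b\}.$$
Well-foundedness provides a least element as soon as $X$ is non-empty.

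\textbf{$X$ is non-empty.} Suppose instead that every $x \in A$ satisfies $x < b$. Then $A \cup \{b\}$ is linearly ordered: elements of $A$ are pairwise comparable since $A$ is a path, and each is below $b$. By maximality of $A$ we get $b \in A$. But then $b < b$, contradicting irreflexivity. Hence $X \neq \emptyset$, and we let $a := \min X$, which exists because $\tree{T}$ is well-founded.

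\textbf{$a \not< b$.} This holds immediately, since $a \in X$.

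\textbf{$(-\infty,a) \subseteq (-\infty,b]$.} Let $t < a$. Since $A$ is a path, it is downwards closed: if $t \notin A$, then $A \cup \{t\}$ would still be a chain because everything below $a$ is comparable with all of $A$ by downward linearity. So $t \in A$. By minimality of $a$ in $X$, $t \notin X$, so $t < b$, and in particular $t \leqslant b$. Hence $b \in \greqset{T}{t}$ for every $t < a$, while $b \notin \grset{T}{a}$. Therefore $b \in \cfset{T}{a}$.

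\textbf{Main obstacle.} The only step needing care is the downward closure of $A$. The point is that for $t < a$ and any $y \in A$, $t$ is comparable with $y$: if $y \leqslant a$, this follows from linearity of $(-\infty,a]$; if $a < y$, then $t < y$. Maximality of $A$ then forces $t \in A$. Non-emptiness of $X$ uses only maximality of $A$, and the remaining steps are immediate from the definition of $\cfset{T}{a}$.
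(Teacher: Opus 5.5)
Your proof is correct and follows the paper's argument: the paper takes a minimal element of $\bigcup\{\greqset{T}{t} : t \in A,\ t \not< b\}$, which is the least element of your set $X$. The paper says only that it is ``readily seen'' that $a \in A$ and $b \in \cfset{T}{a}$; you supply those checks explicitly (non-emptiness of $X$ via maximality of $A$, downward closure of $A$, and the two co-forest conditions).
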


\begin{proof}
	By the well-foundedness of $\tree{T}$, the set $\bigcup \{ \greqset{T}{t} : t \in A \ \text{with} \ t \not< b \}$ contains a minimal element $a$.  It is readily seen that $a \in A$ and $b \in \cfset{T}{a}$.
\end{proof}

Define
$$\for{cf}_{z}(x) := (z \not< x) \wedge \forall y \, \big( (y < z) \rightarrow (y < x) \big).$$
The set $\cfset{T}{a}$ can be defined in $(\tree{T};a)$ by the formula $\for{cf}_{a}(x)$.  Observe that $\for{cf}_{z}(x)$ has quantifier rank $1$.

\begin{lemma} \label{Thm:SideForestEquivalence}
	Let $n$ be a natural number, let $\tree{T}$ and $\tree{S}$ be $\lang$-trees, and let $a \in T$ and $b \in S$ such that $(\tree{T};a) \eq{n+1} (\tree{S};b)$.  Then $\left(\cftree{T}{a};a\right) \eq{n} \left(\cftree{S}{b};b\right)$.
\end{lemma}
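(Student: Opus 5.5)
We argue with Ehrenfeucht-Fra\"iss\'e games, relativising Player II's strategy to the definable set $\cfset{T}{a}$. Since $(\tree{T};a) \eq{n+1} (\tree{S};b)$, Player II has a winning strategy in $\ef{n+1}{(\tree{T};a)}{(\tree{S};b)}$. We derive from it a strategy for $\ef{n}{\left(\cftree{T}{a};a\right)}{\left(\cftree{S}{b};b\right)}$. When Player I picks a node $t_k \in \cfset{T}{a}$ (or $s_k \in \cfset{S}{b}$), Player II answers with the response $s_k$ given by the strategy in the big game. We must then check that this response lies in the co-forest. Once that is done, the partial isomorphism condition on the chosen nodes together with the constants $a$, $b$ is inherited directly from the big game, because $\cftree{T}{a}$ carries the restricted order and $a \in \cfset{T}{a}$.

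The key step is showing that the response lands in $\cfset{S}{b}$. Membership of $x$ in $\cfset{T}{a}$ is expressed by $\for{cf}_{a}(x) = (a \not< x) \wedge \forall y\,((y<a)\rightarrow(y<x))$, which has quantifier rank $1$. The formula uses the constant $a$ but no previously chosen nodes. Suppose $t_k \in \cfset{T}{a}$ is chosen in round $k \leqslant n$ and $s_k$ is the reply. Then at least one round of the $(n+1)$-round game remains. We argue by contradiction.

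\emph{First conjunct.} If $b < s_k$, then since $a \not< t_k$, the atomic condition on $(a,t_k)$ versus $(b,s_k)$ fails. This contradicts Player II's strategy being winning.

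\emph{Second conjunct.} If some $y \in S$ has $y < b$ but $y \not< s_k$, let Player I play $y$ in the next round. Player II's reply $x \in T$ must satisfy $x < a$ and $x \not< t_k$. This contradicts $t_k \in \cfset{T}{a}$.

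The symmetric argument covers moves in $\tree{S}$. To make this rigorous, it is cleanest to phrase it as an invariant. After $k \leqslant n$ rounds, the positions $(\tree{T};a,t_1,\ldots,t_k)$ and $(\tree{S};b,s_1,\ldots,s_k)$ are $(n+1-k)$-equivalent, and $n+1-k \geqslant 1$. Hence they agree on the rank-$1$ formula $\for{cf}_{z}(x_k)$.

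An equivalent syntactic route is relativisation. For each sentence $\varphi$ of quantifier rank $\leqslant n$ in the language of $(\cftree{T}{a};a)$, replace each quantifier $\exists x\,\psi$ by $\exists x\,(\for{cf}_{c}(x)\wedge\psi)$, where $c$ is the constant naming $a$. Since $\for{cf}$ has rank $1$ and is placed inside only one extra level, the resulting formula has rank $\leqslant n+1$. Also $(\cftree{T}{a};a)\models\varphi$ iff $(\tree{T};a)\models\varphi^{\for{cf}}$, which gives the claim immediately.

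The only delicate point, and the main obstacle, is this rank bookkeeping. A naive relativisation nests $\forall y$ inside each quantifier and appears to add $1$ per quantifier. In fact it adds only $1$ overall: the innermost quantifier's guard contributes rank $1$ at depth $n$, while outer guards are absorbed because they are siblings rather than nested. We will verify this by induction on formulas, showing that $\mathrm{qr}(\varphi^{\for{cf}}) \leqslant \mathrm{qr}(\varphi)+1$. Equivalently, in the game version, the single spare round is used only as a one-shot challenge and never consumed by subsequent play.
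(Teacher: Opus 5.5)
Your proposal is correct and is essentially the paper's proof. The paper observes that $\for{cf}_{z}(x)$ has quantifier rank $1$ and concludes $(\cftree{T}{a};a) = (\tree{T};a)^{\for{cf}_{a}} \eq{n} (\tree{S};b)^{\for{cf}_{b}} = (\cftree{S}{b};b)$, which is exactly your relativisation route. Your inductive check that $\mathrm{qr}(\varphi^{\for{cf}}) \leqslant \mathrm{qr}(\varphi)+1$, and the equivalent game argument, make explicit the rank bookkeeping the paper leaves implicit.
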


\begin{proof}
	Since $(\tree{T};a) \eq{n+1} (\tree{S};b)$ and $\for{cf}_{z}(x)$ has quantifier rank $1$ then $\left(\cftree{T}{a};a\right) = (\tree{T};a)^{\for{cf}_{a}} \eq{n} (\tree{S};b)^{\for{cf}_{b}} = \left(\cftree{S}{b};b\right)$.
\end{proof}

\subsection{Cross sections}

Given nodes $a$ and $b$ in a tree $\tree{T}$ with $a<b$, let
\begin{eqnarray*}
	\opensecset{T}{a}{b} & := & \bigcup_{x \in (a,b)} \cfset{T}{x} \\
	\text{and} \ \ \ \rightclosedsecset{T}{a}{b} & := & \bigcup_{x \in (a,b]} \cfset{T}{x}.
\end{eqnarray*}
Hence $\opensecset{T}{a}{b} = \grset{T}{a} \setminus \left( \grset{T}{b} \cup \cfset{T}{b} \right)$ and $\rightclosedsecset{T}{a}{b} = \opensecset{T}{a}{b} \cup \cfset{T}{b} = \grset{T}{a} \setminus \grset{T}{b}$.  These sets are depicted in Figure \ref{Fig:CrossSection}.  The trees $\opensectree{T}{a}{b} := \res{\tree{T}}{\opensecset{T}{a}{b}}$ and $\rightclosedsectree{T}{a}{b} := \res{\tree{T}}{\rightclosedsecset{T}{a}{b}}$ will be called respectively the \defstyle{open cross section} and \defstyle{closed cross section} of $\tree{T}$ with respect to $a$ and $b$.

\begin{figure}
	\begin{center}
		\includegraphics[scale=0.3]{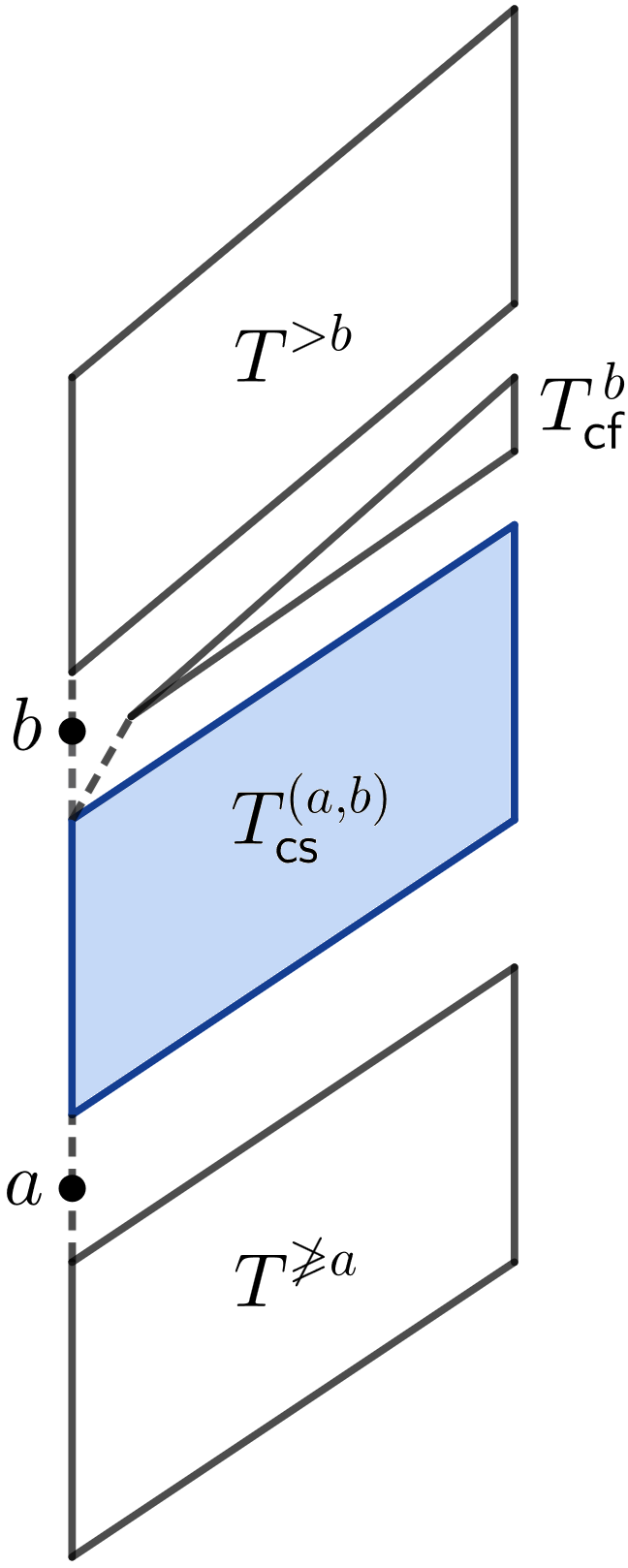}
		\hspace{2cm}
		\includegraphics[scale=0.3]{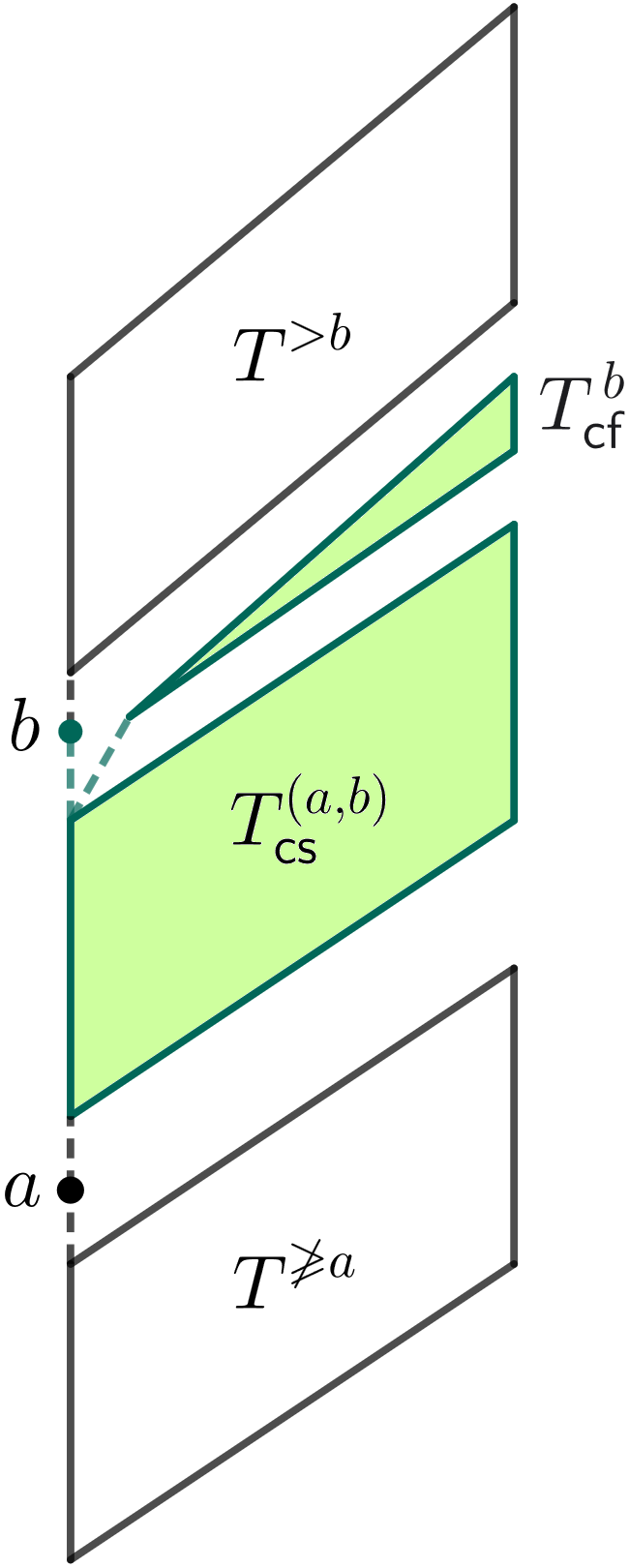}
		\caption{The set $\opensecset{T}{a}{b}$ is depicted in blue; the set $\rightclosedsecset{T}{a}{b}$ is depicted in green.}
		\label{Fig:CrossSection}
	\end{center}
\end{figure}

Letting
\begin{eqnarray*}
	\for{ocs}_{y,z}(x) & := & (x > y) \wedge \neg (x > z) \wedge \neg \for{cf}_{z}(x) \\
	\text{and} \ \ \ \for{ccs}_{y,z}(x) & := & \for{ocs}_{y,z}(x) \vee \for{cf}_{z}(x),
\end{eqnarray*}
observe that $\for{ocs}_{a,b}(x)$ defines the set $\opensecset{T}{a}{b}$ in $(\tree{T};a,b)$, and $\for{ccs}_{a,b}(x)$ defines the set $\rightclosedsecset{T}{a}{b}$ in $(\tree{T};a,b)$, with $\for{ocs}_{y,z}$ and $\for{ccs}_{y,z}$ each having quantifier rank $1$.

\begin{lemma} \label{Thm:CrossSectionEquivalence}
	Let $n$ be a natural number, let $\tree{T}$ and $\tree{S}$ be $\lang$-trees, and let $a,b \in T$ and $c,d \in S$ with $a \ls{T} b$ and $c \ls{S} d$ and such that $(\tree{T};a,b) \eq{n+1} (\tree{S};c,d)$.  Then $\left(\opensectree{T}{a}{b};(a,b)\right) \eq{n} \left(\opensectree{S}{c}{d};(c,d)\right)$.
\end{lemma}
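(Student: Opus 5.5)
This follows the pattern of the proof of Lemma \ref{Thm:SideForestEquivalence}: we relativise to a quantifier-rank-$1$ definable set, and then also handle the extra unary predicate $(a,b)$. The structure $\left(\opensectree{T}{a}{b};(a,b)\right)$ has universe $\opensecset{T}{a}{b}$, the restricted order, and a unary relation interpreted as the open interval $(a,b)$. The universe is defined in $(\tree{T};a,b)$ by $\for{ocs}_{a,b}(x)$, which has quantifier rank $1$. The interval $(a,b)$ is defined by the quantifier-free formula $\iota_{a,b}(x) := (a < x) \wedge (x < b)$. Every node of $(a,b)$ lies in $\opensecset{T}{a}{b}$, since $x \in \cfset{T}{x}$. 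Hence the predicate can be interpreted as the restriction of $\iota_{a,b}$ to the relativised universe.

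First, I would show that for every $\lang$-formula $\varphi(\bar{x})$ in the language of the cross section (with the added unary symbol $P$) of quantifier rank $k$, there is a translation $\varphi^{*}(\bar{x},y,z)$ in the language of $\tree{T}$ of quantifier rank at most $k+1$. This is done by induction on $\varphi$. Replace each atomic $P(u)$ by $\iota_{y,z}(u)$. Relativise each quantifier $\exists u\,\psi$ to $\exists u\,(\for{ocs}_{y,z}(u) \wedge \psi^{*})$, and dually for $\forall$. Any additional constants or unary relations already present in $\lang$ are inherited unchanged; if the cross section carries constants of $\tree{T}$ that lie outside the section, I would assume, as in Lemma \ref{Thm:SideForestEquivalence}, that only the restricted relations are carried along.

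The translation satisfies, for a sentence $\varphi$,
$$\left(\opensectree{T}{a}{b};(a,b)\right) \models \varphi \iff (\tree{T};a,b) \models \varphi^{*}(a,b).$$
The quantifier-rank bound holds because the only quantifier added at each relativisation step is hidden inside $\for{ocs}_{y,z}$. That formula sits as a conjunct beside $\psi^{*}$ rather than nested within it, so its rank contributes via a maximum, not a sum. The total rank of $\varphi^{*}$ is therefore $\max(k, \text{(depth of quantifiers)} + 1) \leqslant k+1$.

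Given this, the lemma follows directly. Suppose $\varphi$ has quantifier rank at most $n$. Then $\varphi^{*}(a,b)$ is a sentence of rank at most $n+1$ about $(\tree{T};a,b)$. By the hypothesis $(\tree{T};a,b) \eq{n+1} (\tree{S};c,d)$, it transfers to $(\tree{S};c,d)$, and hence $\varphi$ transfers to the cross section of $\tree{S}$. As in the proof of Lemma \ref{Thm:SideForestEquivalence}, one can simply cite the standard relativisation fact in the form $(\tree{T};a,b)^{\for{ocs}_{a,b}} \eq{n} (\tree{S};c,d)^{\for{ocs}_{c,d}}$, now with the definable predicate $\iota$ added.

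The only real check is the rank count in the inductive step. Atomic formulas $\for{ocs}_{y,z}(u)$ contribute rank $1$ at the innermost quantifier, giving $k+1$ total and not more. This requires that $\for{ocs}$ is quantifier-rank $1$, which was observed in the text after the definitions of $\for{ocs}_{y,z}$ and $\for{ccs}_{y,z}$.
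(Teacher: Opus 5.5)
Your proposal is correct and is essentially the paper's proof. The paper likewise notes that $(a,b)$ is defined by the quantifier-free formula $(a<x)\wedge(x<b)$, so expanding by this predicate preserves $\equiv^{n+1}$, and then relativises to the rank-$1$ formula $\mathsf{ocs}_{a,b}$ to obtain $\equiv^{n}$; you only make explicit the translation and its rank bound, which the paper takes as a standard fact.
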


\begin{proof}
	Observe that the predicate $(a,b)$ can be defined in $(\tree{T};a,b)$ by the quantifier-free formula $(a < x) \wedge (x < b)$, and $(c,d)$ can be defined in $(\tree{S};c,d)$ by the formula $(c < x) \wedge (x < d)$.  Hence since $(\tree{T};a,b) \eq{n+1} (\tree{S};c,d)$ then also $(\tree{T};a,b;(a,b)) \eq{n+1} (\tree{S};c,d;(c,d))$.  From the fact that $\for{ocs}_{y,z}(x)$ has quantifier rank $1$, we get
	$$\left(\opensectree{T}{a}{b};(a,b)\right) = (\tree{T};a,b;(a,b))^{\for{ocs}_{a,b}} \eq{n} (\tree{S};c,d;(c,d))^{\for{ocs}_{c,d}} = \left(\opensectree{S}{c}{d};(c,d)\right).$$
	
	\qedspace{24pt}
\end{proof}

\begin{lemma} \label{Thm:OpenCrossSectionEquivalence}
	Let $\tree{T}$ and $\tree{S}$ be $\lang$-trees and let $a,b \in T$ and $c,d \in S$ with $a \ls{T} b$ and $c \ls{S} d$ such that $(\tree{T};a) \eq{0} (\tree{S};c)$ and $(\tree{T};b) \eq{1} (\tree{S};d)$.  Then $\left(\opensectree{T}{a}{b};(a,b)\right) \eq{0} \left(\opensectree{S}{c}{d};(c,d)\right)$.
\end{lemma}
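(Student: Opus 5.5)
Since the structures carry no parameters beyond those inherited from $\lang$, I will read $0$-equivalence here as agreement on all atomic sentences of the language. Those sentences mention only the constant symbols of $\lang$ (interpreted in the cross section when they lie in it), the unary relations of $\lang$ restricted to the cross section, and the new predicate $(a,b)$, respectively $(c,d)$. So the plan is not an Ehrenfeucht--Fra\"iss\'e argument. It is a direct check, constant by constant, that each atomic fact is decided the same way on both sides.

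\textbf{Step 1: the same constants survive the restriction.} Fix a constant symbol $e$ of $\lang$, interpreted as $e^{\tree{T}}$ and $e^{\tree{S}}$. By the remarks preceding Lemma \ref{Thm:CrossSectionEquivalence},
$$e^{\tree{T}} \in \opensecset{T}{a}{b} \iff \tree{T} \models (e > a) \wedge \neg(e > b) \wedge \neg\for{cf}_{b}(e).$$
I split this into two parts.
\begin{itemize}
\item The conjunct $e > a$ is atomic in $(\tree{T};a)$, so $(\tree{T};a) \eq{0} (\tree{S};c)$ transfers it to $e > c$ in $\tree{S}$.
\item The conjunct $\neg(e>b) \wedge \neg\for{cf}_b(e)$ is a formula of quantifier rank $1$ in $(\tree{T};b)$, since $\for{cf}_z$ has rank $1$. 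So $(\tree{T};b) \eq{1} (\tree{S};d)$ transfers it to $\neg(e>d) \wedge \neg\for{cf}_d(e)$.
\end{itemize}
This is exactly why the hypotheses are split asymmetrically. The left endpoint enters only through an atomic comparison, while the right endpoint enters through the rank-$1$ co-forest formula. Hence $e^{\tree{T}}$ lies in the open cross section of $\tree{T}$ if and only if $e^{\tree{S}}$ lies in that of $\tree{S}$.

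\textbf{Step 2: atomic facts among surviving constants.} For constants $e,f$ that survive, the facts $e=f$, $e<f$ and $R(e)$, for $R$ a relation of $\lang$, are atomic sentences of $\lang$. They therefore agree already by $(\tree{T};a) \eq{0} (\tree{S};c)$, and restriction to a substructure does not change them. For the predicate, $e \in (a,b)$ is equivalent to $(a<e) \wedge (e<b)$. The first conjunct transfers via the $\eq{0}$ hypothesis on $a,c$, and the second via the $\eq{1}$ hypothesis on $b,d$. So $e \in (c,d)$ in $\tree{S}$ exactly when $e \in (a,b)$ in $\tree{T}$. Together these give agreement on all atomic sentences, hence the claimed $\eq{0}$.

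\textbf{Main obstacle.} The step I expect to need care is the bookkeeping convention for constants that do not lie in the cross section, which should simply be dropped from the language of the restricted structure. With that convention, Step 1 guarantees that the same constant symbols are dropped on both sides, so the two structures still have the same signature. A related issue is nonemptiness of the cross sections, that is, whether $(a,b)=\emptyset$; this is not atomically expressible. If the paper's conventions require it, I would handle it in the same way as Lemma \ref{Thm:CrossSectionEquivalence}, where it is implicit.
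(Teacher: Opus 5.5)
Your proposal is correct and is essentially the paper's argument. Both reduce $0$-equivalence to agreement on atomic facts about the constants, where the only non-atomic ingredient is whether a constant lies in the co-forest of the right endpoint; this is transferred via the rank-$1$ formula $\for{cf}_z(x)$ and the hypothesis $(\tree{T};b) \eq{1} (\tree{S};d)$. The paper first packages the atomic comparisons as $(\tree{T};a,b) \eq{0} (\tree{S};c,d)$, whereas you split $\for{ocs}_{a,b}$ into its conjuncts directly.
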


\begin{proof}
	From the fact that $a \ls{T} b$ and $c \ls{S} d$ and $(\tree{T};a) \eq{0} (\tree{S};c)$ and\linebreak$(\tree{T};b) \eq{1} (\tree{S};d)$, it follows that $(\tree{T};a,b) \eq{0} (\tree{S};c,d)$.  In order to conclude that\linebreak$\left(\opensectree{T}{a}{b};(a,b)\right) \eq{0} \left(\opensectree{S}{c}{d};(c,d)\right)$, we need therefore only establish that for each constant symbol $p$ in $\lang$,
	$$p^{\tree{T}} \in \opensecset{T}{a}{b} \ \Longleftrightarrow \ p^{\tree{S}} \in \opensecset{S}{c}{d}.$$
	For this, it suffices to show that for each constant symbol $p$ in $\lang$,
	$$p^{\tree{T}} \in \cfset{T}{b} \ \Longleftrightarrow \ p^{\tree{S}} \in \cfset{S}{d}.$$
	This in turn follows from the fact that $(\tree{T};b) \eq{1} (\tree{S};d)$ and co-forests can be defined by a formula of quantifier rank $1$.
\end{proof}

\begin{proposition} \label{Thm:ClosedSection}
	Let $\tree{T}$ and $\tree{S}$ be $\lang$-trees and let $a,b \in T$ and $c,d \in S$ with $a \ls{T} b$ and $c \ls{S} d$ and such that $\left(\opensectree{T}{a}{b};(a,b)\right) \eq{n} \left(\opensectree{S}{c}{d};(c,d)\right)$ and $(\tree{T};b) \eq{n+1} (\tree{S};d)$.  Then $\left(\rightclosedsectree{T}{a}{b};(a,b]\right) \eq{n} \left(\rightclosedsectree{S}{c}{d};(c,d]\right)$.
\end{proposition}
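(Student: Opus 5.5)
The plan is to recognise the closed cross section as a sum of the open cross section and the co-forest of $b$. Take the path of $\opensectree{T}{a}{b}$ that extends $(a,b)$; then attach $\cftree{T}{b}$ above it. The set $\rightclosedsecset{T}{a}{b}$ is the disjoint union $\opensecset{T}{a}{b} \cup \cfset{T}{b}$. Every node of $\cfset{T}{b}$ lies above every node of $(a,b)$. A node of $\opensecset{T}{a}{b}$ lying in some $\cfset{T}{x}$ with $x \in (a,b)$, but not equal to or below $x$, is incomparable with every node of $\cfset{T}{b}$. Indeed, such a node is not below $b$, and nodes of $\cfset{T}{b}$ are not below $x$-branching nodes off $(a,b)$.

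I would not fuss over whether $(a,b)$ is literally a path of the open cross section. Instead I would run the Ehrenfeucht-Fra\"iss\'e argument of Lemma \ref{Thm:SumsOfTrees} directly. The only cross relations are these: for $u \in \opensecset{T}{a}{b}$ and $v \in \cfset{T}{b}$ we have $u < v$ iff $u \in (a,b)$, and never $v < u$. Both facts follow from the definitions of co-forests, since every node of $\cfset{T}{b}$ is above all $t < b$, and no node of $\cfset{T}{b}$ is below a node of $\opensecset{T}{a}{b}$. The latter holds because such a node would then be $\leqslant$ some $x<b$, contradicting $b \not< $ it together with it being above all $t<b$.

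The second ingredient is $n$-equivalence of the co-forest parts. By Lemma \ref{Thm:SideForestEquivalence}, $(\tree{T};b) \eq{n+1} (\tree{S};d)$ gives $(\cftree{T}{b};b) \eq{n} (\cftree{S}{d};d)$. The extra predicate in the closed section is $(a,b]$, whose intersection with $\cfset{T}{b}$ is just $\{b\}$. So on that part the predicate is represented by the constant $b$, which is available in $(\cftree{T}{b};b)$.

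Now the game. Player II plays two $n$-round games in parallel: the one witnessing $\left(\opensectree{T}{a}{b};(a,b)\right) \eq{n} \left(\opensectree{S}{c}{d};(c,d)\right)$, and the one witnessing $(\cftree{T}{b};b) \eq{n} (\cftree{S}{d};d)$. She answers a move in either part with a move in the corresponding part. The resulting positions preserve the following: atomic order relations within each part; membership in $(a,b]$ versus $(c,d]$, using the predicate $(a,b)$ in the first game and equality with $b$ in the second; and cross-part order relations, since these depend only on membership in $(a,b)$ by the preceding paragraph. Constants of $\lang$ lie in one part or the other. In the original $\lang$, constants in $\rightclosedsecset{T}{a}{b}$ must lie on the corresponding sides in $\tree{S}$. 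This follows because membership of $p$ in $\cfset{T}{b}$ is rank-$1$ definable from $b$, using $(\tree{T};b) \eq{n+1} (\tree{S};d)$ with $n+1 \geq 1$, while membership in the open section is given by the hypothesis. These constants are then matched by the respective games.

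The main obstacle I expect is the careful verification that no order relation holds from $\cfset{T}{b}$ down into the open section, and that the constants fall on matching sides. Both reduce to unwinding the definition of $\for{cf}_b$.
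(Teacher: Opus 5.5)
Your proposal is correct and is essentially the paper's proof. You split $\rightclosedsectree{T}{a}{b}$ into the open cross section and $(\cftree{T}{b};b)$ glued along $(a,b)$, get $(\cftree{T}{b};b) \eq{n} (\cftree{S}{d};d)$ from Lemma~\ref{Thm:SideForestEquivalence}, and combine the two parts with the game of Lemma~\ref{Thm:SumsOfTrees}. The only difference is cosmetic: the paper cites that lemma with $b$ as an extra constant and recovers $(a,b]$ afterwards via the quantifier-free formula $x \leqslant b$, whereas you track $(a,b]$ inside the game.

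One small repair is needed. Your reason why no $v \in \cfset{T}{b}$ lies below some $u \in \opensecset{T}{a}{b}$ only covers the case $u \in (a,b)$. In general, $v<u$ would make $u$ greater than every $t<b$, which forces $u \in \grset{T}{b} \cup \cfset{T}{b}$, and that set is disjoint from $\opensecset{T}{a}{b}$.
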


\begin{proof}
	By Lemma \ref{Thm:SideForestEquivalence}, $\left(\cftree{T}{b};b\right) \eq{n} \left(\cftree{S}{d};d\right)$.  By Lemma \ref{Thm:SumsOfTrees},
	\begin{multline} \nonumber
		\left(\rightclosedsectree{T}{a}{b};(a,b),b\right) \cong \left(\opensectree{T}{a}{b};(a,b)\right) +_{(a,b)} \left(\cftree{T}{b};b\right) \eq{n} \\ \left(\opensectree{S}{c}{d};(c,d)\right) +_{(c,d)} \left(\cftree{S}{d};d\right) \cong \left(\rightclosedsectree{S}{c}{d};(c,d),d\right).
	\end{multline}
	The predicate $(a,b]$ can be defined in the tree $\left(\rightclosedsectree{T}{a}{b};(a,b),b\right)$ by the formula $x \leqslant b$, and $(c,d]$ can be defined in $\left(\rightclosedsectree{S}{c}{d};(c,d),d\right)$ by the formula $x \leqslant d$.  It follows that $\left(\rightclosedsectree{T}{a}{b};(a,b),b,(a,b]\right) \eq{n} \left(\rightclosedsectree{S}{c}{d};(c,d),d,(c,d]\right)$ and hence $\left(\rightclosedsectree{T}{a}{b};(a,b]\right) \eq{n} \left(\rightclosedsectree{S}{c}{d};(c,d]\right)$.
\end{proof}

\section{Properties of trees}  \label{Sec:Properties}

\subsection{Properties involving leaves}

A tree $\tree{T}$ will be called \defstyle{ideal} when for each leaf $x \in T$, $x$ has no immediate predecessor, i.e.
$$\forall x \big(\for{leaf}(x) \rightarrow \forall y \big((y < x) \rightarrow \exists z (y < z < x)\big)\big).$$
$\tree{T}$ will be called \defstyle{monofolic} when for each leaf $x \in T$, $x$ is the only node for which $(-\infty,x) \ll x$, i.e.
$$\forall x \big( \for{leaf}(x) \rightarrow \forall y \big( \forall z ((z < x) \rightarrow (z < y)) \rightarrow (y = x) \big) \big).$$

\subsection{Variegated trees}

Let $A$ be a path in $\tree{T}$ and let $n$ be a natural number.  Define $\ell(A) := \emptyset$ if $A$ has no leaf, and $\ell(A) := \{a\}$ if $a$ is the leaf of $A$.  $A^-$ will denote the leafless part of $A$, i.e.\ $A^- = A \setminus \ell(A)$.  The \defstyle{$n$-spectrum} of $A$ in $\tree{T}$, denoted $\spec{n}{\tree{T}}{A}$, is the set of integers $i$ for which the set $\{ t \in A^- : \tree{T} \models \tau^n_i(t) \}$ is cofinal in $A^-$, where $\tau^n_1,\ldots,\tau^n_m$ are the $n$-characteristic formulas with one free variable in the language $\lang(\tree{T})$.  $\tree{T}$ is \defstyle{variegated} when for each natural number $n$, each open path $B$ in $\tree{T}$, and each tuple $\bar{a}$ from $T$, if $B$ has $n$-spectrum $X$ in $(\tree{T};\bar{a})$ then there exists a closed path $C$ in $\tree{T}$ such that $C$ also has $n$-spectrum $X$ in $(\tree{T};\bar{a})$; note that here the characteristic formulas $\tau^n_i$ are considered in the language $\lang(\tree{T};\bar{a})$.

\subsection{Blocks}

Let $\tree{T}$ be a tree.  For $X \subseteq T$, define
$$\block[\tree{T}]{n}{X} := \left\{ i : \tree{T} \models \tau^n_i(t) \ \text{for some} \ t \in X \right\},$$
where the characteristic formulas $\tau^n_i$ are considered in the language $\lang(\tree{T})$.  If the tree $\tree{T}$ being worked in is clear, $\block[\tree{T}]{n}{X}$ will be written simply as $\block{n}{X}$.  If $X$ is the interval $(x,y)$ then $\block[\tree{T}]{n}{X} = \block[\tree{T}]{n}{(x,y)}$ will be written simply as $\block[\tree{T}]{n}{x,y}$.  Given a set $M$ of positive integers, the set $X$ will be called an \defstyle{$(n,M)$-block} when $\block[\tree{T}]{n}{X} = M$.  Observe that for any tree $\tree{S}$ in the language $\lang(\tree{T})$ and $Y$ a set of nodes in $\tree{S}$,
\begin{equation} \label{Eqn:Block}
	\block[\tree{T}]{n+1}{X} = \block[\tree{S}]{n+1}{Y} \ \Longrightarrow \ \block[\tree{T}]{n}{X} = \block[\tree{S}]{n}{Y}.
\end{equation}
Let $\isablock{M}{n}(x_1,x_2)$ be the formula
\begin{equation} \label{Eqn:Beta}
	\forall y \left( (x_1 < y < x_2) \rightarrow \bigvee_{i \in M} \tau^n_i(y) \right) \wedge \left(\bigwedge_{i \in M} \exists y \left((x_1 < y < x_2) \wedge \tau^n_i(y)\right)\right),
\end{equation}
which has quantifier rank $n+1$ and carries the following meaning: for nodes $a_1,a_2 \in T$, $\tree{T} \models \isablock{M}{n}(a_1,a_2)$ if and only if $\block[\tree{T}]{n}{a_1,a_2} = M$.

\subsection{Balanced and focal trees}

Given a tree $\tree{T}$ and a node $a \in T$, define
$$\psubset{T}{a} := \{ x \in T : x \smile a \}$$
and $\psubtree{T}{a} := \res{\tree{T}}{\psubset{T}{a}}$.  The structure $\psubtree{T}{a}$ is a tree that will be called the \defstyle{induced subtree} of $\tree{T}$ generated by $a$.  $\tree{T}$ will be called \defstyle{balanced} when for each natural number $n$ and all nodes $x,y \in T$, $\block{n}{\psubset{T}{x}} = \block{n}{\psubset{T}{y}}$.  The property of being balanced can be expressed by the axiom scheme $\for{BAL}$ that contains the sentence
$$\forall x \forall y \big( \exists z \big( (z \smile x) \wedge \varphi(z) \big) \rightarrow \exists z \big( (z \smile y) \wedge \varphi(z) \big) \big)$$
for each $\lang(\tree{T})$-formula $\varphi(x)$.
$\tree{T}$ will be called \defstyle{hereditarily balanced} when $\greqtree{T}{t}$ is balanced for each node $t \in T$, i.e.\ when $\block[\greqtree{T}{t}]{n}{\greqset{T}{t}(x)} = \block[\greqtree{T}{t}]{n}{\greqset{T}{t}(y)}$ for all $x,y \in \greqset{T}{t}$.  The property of being hereditarily balanced can be expressed using the set of sentences $\left\{ \forall v \left(\sigma^{\theta}\right) : \sigma \in \for{BAL} \right\}$, where $\sigma^{\theta}$ denotes the relativisation of $\sigma$ to the formula $\theta(w,v) := (v \leqslant w)$.\footnote{We use the notation of \cite{Rosenstein} for indicating relativisations.  Thus to obtain $\sigma^{\theta}$ from $\sigma$, each subformula of $\sigma$ of the form $\forall x (\psi)$ is replaced by the formula $\forall x \left( (u \leqslant x) \rightarrow \psi^{\theta}\right)$, and each subformula of $\sigma$ of the form $\exists x (\psi)$ is replaced by the formula $\exists x \left( (u \leqslant x) \wedge \psi^{\theta}\right)$.}

$\tree{T}$ will be called \defstyle{focal} when, for each node $a \in T$ and each $\lang(\tree{T})$-formula $\varphi(x,y)$, if there exists a node $b \in \greqset{T}{a}$ such that $\tree{T} \models \varphi(a,b)$, then for each node $c \in \greqset{T}{a}$ there exists $d \in \greqset{T}{a}$ with $d \smile c$ such that $\tree{T} \models \varphi(a,d)$.  The property of focality can be expressed using the axiom scheme that contains the sentence
$$\forall x \forall y \big( \big( (x \leqslant y) \wedge \varphi(x,y) \big) \rightarrow \forall u \big( (x \leqslant u) \rightarrow \exists w \big( (x \leqslant w) \wedge (w \smile u) \wedge \varphi(x,w) \big) \big) \big)$$
for each $\lang(\tree{T})$-formula $\varphi(x,y)$.

\begin{proposition}
	Let $\tree{T}$ be a tree.  $\tree{T}$ is hereditarily balanced if and only if $\tree{T}$ is focal.
\end{proposition}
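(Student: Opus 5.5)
The plan is to prove both implications by translating between truth in the closed cone $\greqtree{T}{a}$ and truth in $\tree{T}$ with parameter $a$. One direction is relativisation. The other needs a Feferman–Vaught style decomposition of $\tree{T}$ at $a$.

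\textbf{Focal implies hereditarily balanced.} Fix $t\in T$, nodes $x,y\in\greqset{T}{t}$, $n$, and some $i\in\block[\greqtree{T}{t}]{n}{\greqset{T}{t}(x)}$. Then there is $z\in\greqset{T}{t}$ with $z\smile x$ and $\greqtree{T}{t}\models\tau^n_i(z)$. Let $\varphi(v,w)$ be the relativisation $\tau^n_i(w)^{\theta}$, with every quantifier bounded by $v\leqslant\cdot$, and conjoin $v\leqslant w$. Then $\tree{T}\models\varphi(t,z)$ with $z\geqslant t$. Focality applied to $a=t$ and $c=y$ yields $d\geqslant t$ with $d\smile y$ and $\tree{T}\models\varphi(t,d)$. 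This says $\greqtree{T}{t}\models\tau^n_i(d)$, so $i\in\block[\greqtree{T}{t}]{n}{\greqset{T}{t}(y)}$. Symmetry gives equality of the two blocks.

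There is a minor bookkeeping point about constants. If $\lang(\tree{T})$ has constants outside the cone, the language of $\greqtree{T}{t}$ is understood as the restricted one. The relativisation simply omits those constants, so the argument is unaffected.

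\textbf{Hereditarily balanced implies focal.} The key step, which I expect to be the main obstacle, is the following claim.
\begin{quote}
For each $\lang(\tree{T})$-formula $\varphi(x,y)$ of quantifier rank $n$ and each $a\in T$, whether $\tree{T}\models\varphi(a,d)$ holds for a node $d\geqslant a$ depends only on the $n$-type of $d$ in $(\greqtree{T}{a};a)$.
\end{quote}

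To prove the claim, decompose $\tree{T}$ as the sum of $\left(\compeqtree{T}{a};(-\infty,a)\right)$ and $\left(\greqtree{T}{a};a\right)$, attached along $(-\infty,a)$. This is exactly the sum construction of Section \ref{Sec:Operations}, with the downward closed stem $(-\infty,a)$ in place of a path.

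The proof of Lemma \ref{Thm:SumsOfTrees} only uses that membership of the attaching set is part of the structure, so it goes through verbatim. Start the Ehrenfeucht–Fra\"iss\'e game from the position with $d$ and $d'$ already chosen in the cone component, and play the two component games in parallel. This shows that $(\greqtree{T}{a};a,d)\eq{n}(\greqtree{T}{a};a,d')$ implies $(\tree{T};a,d)\eq{n}(\tree{T};a,d')$.

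Now assume $\tree{T}$ is hereditarily balanced. Suppose $b\geqslant a$ and $\tree{T}\models\varphi(a,b)$, where $\varphi$ has rank $n$. Let $i$ be the index with $\greqtree{T}{a}\models\tau^n_i(b)$; the constant $a$ is definable there as the root, so taking $\tau^n_i$ in the cone's language loses nothing.

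Since $b\smile a$, we have $i\in\block{n}{\greqset{T}{a}(a)}$. By balancedness of $\greqtree{T}{a}$, for any $c\geqslant a$ also $i\in\block{n}{\greqset{T}{a}(c)}$. Hence there is $d\geqslant a$ with $d\smile c$ and $\tau^n_i(d)$ true in the cone. By the claim, $\tree{T}\models\varphi(a,d)$, which is exactly focality.
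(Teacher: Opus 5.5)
\textbf{There is a genuine (but easily repaired) gap in your proof that hereditary balance implies focality.}

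Your proof that focality implies hereditary balance is correct. Relativising $\tau^n_i$ to the cone gives a single formula $\varphi(v,w)$ to which focality applies directly. The paper instead applies focality to $\cfor{T}{a,u}{n}$ and then passes to the cone via Lemma~\ref{Thm:ConeEquivalence}; your route is slightly more direct. Your Feferman--Vaught claim for the converse is also correct. It is exactly the paper's decomposition of $(\tree{T};a,d)$ as $\compeqtree{T}{a}+_{(-\infty,a)}(\greqtree{T}{a};a,d)$. You are right to note that Lemma~\ref{Thm:SumsOfTrees} is being used with the stem $(-\infty,a)$, which in general is not a path.

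The gap is the sentence ``the constant $a$ is definable there as the root, so taking $\tau^n_i$ in the cone's language loses nothing.'' Your claim needs $(\greqtree{T}{a};a,b)\eq{n}(\greqtree{T}{a};a,d)$. But $\greqtree{T}{a}\models\tau^n_i(b)\wedge\tau^n_i(d)$ only gives $(\greqtree{T}{a};b)\eq{n}(\greqtree{T}{a};d)$. Defining the root costs one quantifier, so the equivalence with $a$ as a constant does not follow.

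Two examples show the step fails:
\begin{enumerate}[(i)]
\item Take $n=0$, $\varphi(x,y):=(x=y)$ and $b:=a$. In the pure order language all nodes of the cone share one $0$-type. So your argument permits $d:=c$ and would conclude $\tree{T}\models c=a$.
\item The problem is not confined to $n=0$. Take the tree of finite sequences of natural numbers ordered by extension, with $a$ the root, $u$ a node of height $1$ and $w$ a node of height $2$. Then $u$ and $w$ have the same $1$-type, since each has nodes below, above and incomparable to it. Yet $\varphi(x,y):=\exists z\,(x<z\wedge z<y)$ holds of $(a,w)$ and fails of $(a,u)$. With $b:=w$ and $c:=u$, your argument allows $d:=u$. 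This tree is even hereditarily balanced.
\end{enumerate}

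The repair is what the paper does. Take $i$ with $\greqtree{T}{a}\models\tau^{n+1}_i(b)$, and use balancedness of the cone at level $n+1$ to find $d\geqslant a$ with $d\smile c$ and $(\greqtree{T}{a};b)\eq{n+1}(\greqtree{T}{a};d)$. In the cone, the atomic formulas $t=a$, $a<t$ and $t<a$ are equivalent to $\neg\exists z\,(z<t)$, $\exists z\,(z<t)$ and $\bot$ respectively. So every rank-$n$ formula in $a,y$ becomes a rank-$(n+1)$ formula in $y$ alone. This gives $(\greqtree{T}{a};a,b)\eq{n}(\greqtree{T}{a};a,d)$, and your claim then yields $\tree{T}\models\varphi(a,d)$.
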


\begin{proof}
	$(\Longrightarrow)$: Suppose that $\tree{T}$ is hereditarily balanced.  Let $n$ be a natural number and let $\varphi(x,y)$ be an $\lang(\tree{T})$-formula of quantifier rank $n$.  Let $a \in T$ and $b,c \in \greqset{T}{a}$ be nodes such that $\tree{T} \models \varphi(a,b)$.  Let $i$ be that integer for which $\greqtree{T}{a} \models \tau^{n+1}_i(b)$, where the characteristic formulas $\tau^{n+1}_j$ are taken in the language $\lang(\greqtree{T}{a})$.  Since $\tree{T}$ is hereditarily balanced, $\block[\greqtree{T}{a}]{n+1}{\greqset{T}{a}(b)} = \block[\greqtree{T}{a}]{n+1}{\greqset{T}{a}(c)}$, hence there exists $d \in \greqset{T}{a}$ with $d \smile c$ such that $\greqtree{T}{a} \models \tau^{n+1}_i(d)$, from which $(\greqtree{T}{a};b) \eq{n+1} (\greqtree{T}{a};d)$, hence $(\greqtree{T}{a};a,b) \eq{n} (\greqtree{T}{a};a,d)$.  Since trivially $(\compeqtree{T}{a};(-\infty,a)) \eq{n} (\compeqtree{T}{a};(-\infty,a))$ then by Lemma \ref{Thm:SumsOfTrees},
	$$(\tree{T};a,b) \cong \compeqtree{T}{a} +_{(-\infty,a)} (\greqtree{T}{a};a,b) \eq{n} \compeqtree{T}{a} +_{(-\infty,a)} (\greqtree{T}{a};a,d) \cong (\tree{T};a,d)$$
	hence $\tree{T} \models \varphi(a,d)$, as required.
	
	$(\Longleftarrow)$: Suppose that $\tree{T}$ is focal.  Let $a \in T$ and let $b,c \in \greqset{T}{a}$.  It must be shown that $\block[\greqtree{T}{a}]{n}{\greqset{T}{a}(b)} = \block[\greqtree{T}{a}]{n}{\greqset{T}{a}(c)}$.  Let $u \in \greqset{T}{a}(b)$ and let $i$ be that integer for which $\greqtree{T}{a} \models \tau^n_i(u)$.  Observe that $\tree{T} \models \cfor{T}{a,u}{n}(a,u)$.  By the focality of $\tree{T}$ there exists $v \in \greqset{T}{a}(c)$ such that $\tree{T} \models \cfor{T}{a,u}{n}(a,v)$, hence $(\tree{T};a,u) \eq{n} (\tree{T};a,v)$.  By Lemma \ref{Thm:ConeEquivalence}, $(\tree{T};u)^{\geqslant a} \eq{n} (\tree{T};v)^{\geqslant a}$, from which $(\greqtree{T}{a};u) \eq{n} (\greqtree{T}{a};v)$, hence $\greqtree{T}{a} \models \tau^n_i(v)$, as required.
\end{proof}

\section{Equivalence of cross sections}  \label{Sec:Equivalence}

The following result adapts \cite[Lemma 3.2]{MwesigyeTruss} to the setting of trees.

\begin{lemma} \label{Thm:IntervalTheorem}
	Let $n$ be a natural number.  Suppose that
	\begin{enumerate}[(I)]
		\item
			$\tree{T}$ and $\tree{S}$ are focal well-founded $\lang$-trees;
		\item
			$a_1,a_2 \in T$ such that $a_1 \ls{T} a_2$, and $b_1,b_2 \in S$ such that $b_1 \ls{S} b_2$;
		\item
			$\left(\tree{T};a_1\right) \eq{n+2} \left(\tree{S};b_1\right)$ and $\left(\tree{T};a_2\right) \eq{n+2} \left(\tree{S};b_2\right)$;
		\item
			for some set $M$ of positive integers,
			\begin{enumerate}[(i)]
				\item
					$(a_1,a_2)$ can be partitioned into $2^n-1$ subintervals, each of which is an \break $(n+1,M)$-block, and
				\item
					$(b_1,b_2)$ can be partitioned into $2^n-1$ subintervals, each of which is an \break $(n+1,M)$-block.
			\end{enumerate}
	\end{enumerate}
	Then $\left(\opensectree{T}{a_1}{a_2};(a_1,a_2)\right) \eq{n} \left(\opensectree{S}{b_1}{b_2};(b_1,b_2)\right)$.
\end{lemma}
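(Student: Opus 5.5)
We prove the statement by induction on $n$, playing the game $\ef{n}{\left(\opensectree{T}{a_1}{a_2};(a_1,a_2)\right)}{\left(\opensectree{S}{b_1}{b_2};(b_1,b_2)\right)}$ in the style of \cite[Lemma 3.2]{MwesigyeTruss}. For $n=0$, hypothesis (III) gives $(\tree{T};a_1)\eq{0}(\tree{S};b_1)$ and $(\tree{T};a_2)\eq{1}(\tree{S};b_2)$, so Lemma \ref{Thm:OpenCrossSectionEquivalence} applies directly. Hypothesis (IV) is vacuous here, since $2^0-1=0$.

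For the inductive step from $n$ to $n+1$, we use Fact \ref{Thm:BackAndForth}. Suppose Player I picks a node $t$ in $\opensecset{T}{a_1}{a_2}$. By Proposition \ref{Thm:Partition}, applied to a path through $a_2$, there is $x\in(a_1,a_2)$ with $t\in\cfset{T}{x}$. The node $x$ lies in one of the $2^{n+1}-1$ blocks, and each block is an $(n+2,M)$-block. We choose the $2^n-1$ blocks on the side of $x$ with the fewer blocks. The other side then still contains at least $2^n-1$ whole blocks, and it also contains the part of $x$'s own block lying beyond $x$.

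Let $i$ be the $(n+2)$-type of $x$ in $\tree{T}$; note $i\in M$. We pick $y\in(b_1,b_2)$ of type $i$ inside the block of $(b_1,b_2)$ in the same position, so that each side of $y$ contains at least $2^n-1$ whole blocks. Such a $y$ exists because the blocks are all $(n+2,M)$-blocks. Because each block realises all of $M$, the union of any $k\geqslant 1$ consecutive $(n+2,M)$-blocks is again an $(n+2,M)$-block, and by \eqref{Eqn:Block} it is also an $(n+1,M)$-block. We regroup the intervals $(a_1,x)$, $(x,a_2)$, $(b_1,y)$ and $(y,b_2)$, absorbing the leftover parts of the split blocks into neighbouring blocks, so that each is partitioned into exactly $2^n-1$ $(n+1,M)$-blocks.

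The main obstacle is the split block itself. A fragment such as $(\text{block start},x)$ need not realise all of $M$, so it must be absorbed into a full neighbouring block. This requires a full block on each side of $x$, and when $x$ lies in the first or last block that is not available. The remedy is to use the $\eq{n+2}$ information about $x$ and $y$. Focality, in the form of the hereditary-balance proposition, lets us move $t$ inside $\cftree{T}{x}$ without changing relevant types. Alternatively, we choose $y$ in the first block with the fragment $(b_1,y)$ matching $(a_1,x)$ via \eqref{Eqn:Beta}: the statement ``$(x_1,x)$ realises exactly the set $N$'' has quantifier rank $n+2$, so once $y$ matches the $(n+2)$-type of $(a_1,x)$ together with $a_1$, the fragment is handled. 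This requires strengthening the induction hypothesis to allow one or both end-fragments to be arbitrary $(n+1,N)$-blocks matched on both sides.

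We also need $(\tree{T};x)\eq{n+1}(\tree{S};y)$ to apply the induction hypothesis to the pairs $(a_1,x),(b_1,y)$ and $(x,a_2),(y,b_2)$. The downward step in (III) is provided by \eqref{Eqn:Block} and $\tau^{n+2}_i$ together with (III). The induction hypothesis then gives $n$-equivalence of the two open cross sections on each side, with their intervals as predicates. Lemma \ref{Thm:SideForestEquivalence} gives $(\cftree{T}{x};x)\eq{n}(\cftree{S}{y};y)$. We choose $s\in\cfset{S}{y}$ with $(\cftree{T}{x};x,t)\eq{n}(\cftree{S}{y};y,s)$, using the $(n+1)$-equivalence supplied by Lemma \ref{Thm:SideForestEquivalence} at level $n+2$.

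Finally, we glue the three pieces, the two open cross sections and the co-forest, using the sum operation and Lemma \ref{Thm:SumsOfTrees}, as in Proposition \ref{Thm:ClosedSection}. The order between pieces is determined by membership in $(a_1,x)$, in $x$, and so on. This yields $\left(\opensectree{T}{a_1}{a_2};(a_1,a_2),t\right)\eq{n}\left(\opensectree{S}{b_1}{b_2};(b_1,b_2),s\right)$, and the symmetric move by Player I is handled in the same way.
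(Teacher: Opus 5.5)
There is a genuine gap, and it sits at the obstacle you flag yourself: the side of $x$ that contains fewer than $2^n-1$ whole blocks.

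\textbf{Why your remedies do not work.} Neither remedy you sketch closes this gap.
\begin{itemize}
\item $t$ is Player I's move, so Player II cannot ``move $t$'' inside $\cftree{T}{x}$. Focality has no role in the co-forest of $x$.
\item Your strengthened induction hypothesis matches end-fragments only by their block sets. It is neither formulated nor proved, and it is doubtful. Equal block sets say nothing about the $n$-theory of the cross section over a fragment that is not made of enough full blocks.
\item Your choice of $y$ is over-determined. You cannot prescribe at once its $(n+2)$-type, its position among the blocks of $(b_1,b_2)$, and the type of the pair $(b_1,y)$. A witness realising the type of $(a_1,x)$ over $a_1$ lands wherever it lands.
\end{itemize}

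\textbf{The missing argument.} Split $(a_1,a_2)$ as $I_1\cup I_2\cup I_3$ and $(b_1,b_2)$ as $J_1\cup J_2\cup J_3$. Here $I_2$ and $J_2$ are single blocks, and each outer part consists of $2^n-1$ blocks. By \eqref{Eqn:Block}, all blocks are $(n+1,Q)$-blocks for one derived set $Q$ (not $M$). Suppose $x\in I_1\cup I_2$.
\begin{itemize}
\item From $(\tree{T};a_1)\equiv^{n+3}(\tree{S};b_1)$, get $r$ with $(\tree{T};a_1,x)\equiv^{n+2}(\tree{S};b_1,r)$.
\item Use focality of $\tree{S}$ to take $r$ comparable with $b_2$. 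This is the real role of focality: it puts $r$ on the branch through $b_2$.
\item Lemma \ref{Thm:CrossSectionEquivalence} then settles the short side outright at level $n+1$. No induction or block counting is needed there.
\item The difficulty moves to the long side, because $r$ may fall in $J_3$ or at or above $b_2$. If $r\in J_1\cup J_2$, take $q=r$. Otherwise choose $q\in J_2$ with the same $(n+2)$-type as $x$.
\item Since $\isablock{R}{n+1}$ has rank $n+2$, the block set $R$ of $(a_1,x)$ transfers to $(b_1,r)$. We have $R\subseteq Q$, and $(b_1,r)\supseteq J_2$ gives $Q\subseteq R$, so $R=Q$.
\item Hence both $(b_1,r)$ and $(b_1,q)$ split into $2^n-1$ $(n+1,Q)$-blocks. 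The induction hypothesis applied inside $\tree{S}$ lets you replace $r$ by $q$.
\item After that, $(x,a_2)$ versus $(q,b_2)$ goes by induction as you describe.
\end{itemize}
The case $x\in I_3$ is symmetric via $a_2,b_2$, and focality is not needed there.

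\textbf{Smaller slip.} The induction hypothesis needs $(\tree{T};x)\equiv^{n+2}(\tree{S};q)$, not $\equiv^{n+1}$. The type match does supply this.
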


\begin{proof}
	Let $P(n,\tree{T},\tree{S},a_1,a_2,b_1,b_2)$ denote the conjunction of properties $(I)$ to $(IV)$.  We prove the result using induction on $n$.  It follows from Lemma \ref{Thm:OpenCrossSectionEquivalence} that $\left(\opensectree{T}{a_1}{a_2};(a_1,a_2)\right) \eq{0} \left(\opensectree{S}{b_1}{b_2};(b_1,b_2)\right)$ whenever $P(0,\tree{T},\tree{S},a_1,a_2,b_1,b_2)$.
	
	Fix a natural number $k$ for which $\left(\opensectree{T}{a_1}{a_2};(a_1,a_2)\right) \eq{k} \left(\opensectree{S}{b_1}{b_2};(b_1,b_2)\right)$ whenever $P(k,\tree{T},\tree{S},a_1,a_2,b_1,b_2)$, and assume $P(k+1,\tree{U},\tree{V},c_1,c_2,d_1,d_2)$.  Then
	\begin{eqnarray}
		\left(\tree{U};c_1\right) & \eq{k+3} & \left(\tree{V};d_1\right) \label{Eqn:IntervalTheorem1} \\
		\text{and} \ \ \ \left(\tree{U};c_2\right) & \eq{k+3} & \left(\tree{V};d_2\right). \label{Eqn:IntervalTheorem2}
	\end{eqnarray}
	We now use Fact \ref{Thm:BackAndForth} to show that $\left(\opensectree{U}{c_1}{c_2};(c_1,c_2)\right) \eq{k+1} \left(\opensectree{V}{d_1}{d_2};(d_1,d_2)\right)$.  To this end, let $p \in \opensecset{U}{c_1}{c_2}$.  We will exhibit $q \in \opensecset{V}{d_1}{d_2}$ such that $\left(\opensectree{U}{c_1}{c_2};(c_1,c_2),p\right) \eq{k} \left(\opensectree{V}{d_1}{d_2};(d_1,d_2),q\right)$.  If instead we are given $p \in \opensecset{V}{d_1}{d_2}$, the argument for finding $q \in \opensecset{U}{c_1}{c_2}$ such that $\left(\opensectree{U}{c_1}{c_2};(c_1,c_2),q\right) \eq{k} \left(\opensectree{V}{d_1}{d_2};(d_1,d_2),p\right)$, is similar.
	
	By assumption, there exists a set $N$ of positive integers such that $(c_1,c_2)$ can be partitioned into $2^{k+1}-1$ intervals, each of which is a $(k+2,N)$-block, and $(d_1,d_2)$ can be partitioned into $2^{k+1}-1$ intervals, each of which is a $(k+2,N)$-block.  From \eqref{Eqn:Block} it follows that there exists a set $Q$ of positive integers such that each of the aforementioned subintervals of $(c_1,c_2)$ is a $(k+1,Q)$-block, and each of the aforementioned subintervals of $(d_1,d_2)$ is a $(k+1,Q)$-block.  Using the fact that $2^{k+1}-1 = (2^k-1)+1+(2^k-1)$, it hence follows that $(c_1,c_2)$ can be partitioned into intervals $I_1$, $I_2$ and $I_3$ with $I_1 \double[T] I_2 \double[T] I_3$, and $(d_1,d_2)$ can be partitioned into intervals $J_1$, $J_2$ and $J_3$ with $J_1 \double[S] J_2 \double[S] J_3$, such that
	\begin{enumerate}[(A)]
		\item
			$I_1$ and $I_3$ can each be partitioned into $2^k-1$ subintervals, each of which is both a $(k+2,N)$-block and a $(k+1,Q)$-block, and
		\item
			$J_1$ and $J_3$ can each be partitioned into $2^k-1$ subintervals, each of which is both a $(k+2,N)$-block and a $(k+1,Q)$-block, and
		\item
			$I_2$ is both a $(k+2,N)$-block and a $(k+1,Q)$-block, and $J_2$ is both a $(k+2,N)$-block and a $(k+1,Q)$-block.
	\end{enumerate}
	Let $p_0$ be that node on $(c_1,c_2)$ for which $p \in \cfset{U}{p_0}$; such $p_0$ exists by Proposition \ref{Thm:Partition}.  We will consider the following cases: (Case 1) $p_0 \in I_1 \cup I_2$, and (Case 2) $p_0 \in I_3$.
	
	Case 1: $p_0 \in I_1 \cup I_2$.  By \eqref{Eqn:IntervalTheorem1} there exists $r_0 \gr{T} d_1$ such that
	\begin{equation} \label{Eqn:IntervalTheoremCase1No1}
		\left(\tree{U};c_1,p_0\right) \eq{k+2} \left(\tree{V};d_1,r_0\right).
	\end{equation}
	By the focality of $\tree{V}$, there exists $r_1 \comp{T} d_2$ such that
	\begin{equation} \label{Eqn:IntervalTheoremCase1No2}
		\left(\tree{U};c_1,p_0\right) \eq{k+2} \left(\tree{V};d_1,r_1\right)
	\end{equation}
	hence, by Lemma \ref{Thm:CrossSectionEquivalence},
	\begin{equation} \label{Eqn:IntervalTheoremCase1No3}
		\left(\opensectree{U}{c_1}{p_0};(c_1,p_0)\right) \eq{k+1} \left(\opensectree{V}{d_1}{r_1};(d_1,r_1)\right).
	\end{equation}
	
	We next identify a node $q_0 \in J_1 \cup J_2$ for which it will be shown that
	\begin{eqnarray}
		\left(\opensectree{U}{c_1}{p_0};(c_1,p_0)\right) & \eq{k\phantom{+1}} & \left(\opensectree{V}{d_1}{q_0};(d_1,q_0)\right), \label{Eqn:IntervalTheoremCase1No4} \\
		\left(\cftree{U}{p_0};p_0\right) & \eq{k+1} & \left(\cftree{V}{q_0};q_0\right) \label{Eqn:IntervalTheoremCase1No5} \\
		\text{and} \ \ \left(\opensectree{U}{p_0}{c_2};(p_0,c_2)\right) & \eq{k\phantom{+1}} & \left(\opensectree{V}{q_0}{d_2};(q_0,d_2)\right). \label{Eqn:IntervalTheoremCase1No6}
	\end{eqnarray}
	For this, we distinguish two subcases: (Case 1.1) $r_1 \in J_1 \cup J_2$ and (Case 1.2) $r_1 \not\in J_1 \cup J_2$.
	
	Case 1.1: $r_1 \in J_1 \cup J_2$.  Simply take $q_0 := r_1$.  Then \eqref{Eqn:IntervalTheoremCase1No4} follows immediately from \eqref{Eqn:IntervalTheoremCase1No3}.  From \eqref{Eqn:IntervalTheoremCase1No2} we obtain
	\begin{equation} \label{Eqn:IntervalTheoremCase1No7}
		\left(\tree{U};p_0\right) \eq{k+2} \left(\tree{V};q_0\right),
	\end{equation}
	upon which \eqref{Eqn:IntervalTheoremCase1No5} follows by an application of Lemma \ref{Thm:SideForestEquivalence}.  Finally, it follows from (A), (B) and (C) that $(p_0,c_2)$ can be partitioned into $2^k-1$ subintervals, each of which is a $(k+1,Q)$-block, and $(q_0,d_2)$ can be partitioned into $2^k-1$ subintervals, each of which is a $(k+1,Q)$-block.  Along with \eqref{Eqn:IntervalTheorem2} and \eqref{Eqn:IntervalTheoremCase1No7} this gives $P(k,\tree{U},\tree{V},p_0,c_2,q_0,d_2)$, from which \eqref{Eqn:IntervalTheoremCase1No6} is obtained using the inductive hypothesis.
	
	Case 1.2: $r_1 \not\in J_1 \cup J_2$.  From the fact that $\block{k+2}{I_1 \cup I_2} = \block{k+2}{J_2} = N$, along with \eqref{Eqn:IntervalTheoremCase1No2}, it follows that there exists $q_0 \in J_2$ such that
	\begin{equation} \label{Eqn:IntervalTheoremCase1No8}
		\left(\tree{U};p_0\right) \eq{k+2} \left(\tree{V};r_1\right) \eq{k+2} \left(\tree{V};q_0\right).
	\end{equation}
	Suppose that $\block{k+1}{c_1,p_0} = R$ and note that $R \subseteq Q$.  Then $\tree{U} \models \isablock{R}{k+1}(c_1,p_0)$, hence from \eqref{Eqn:IntervalTheoremCase1No2} we get $\tree{V} \models \isablock{R}{k+1}(d_1,r_1)$, hence $\block{k+1}{d_1,r_1} = R$.  However, since $r_1 \not\in J_1 \cup J_2$ then $Q \subseteq R$, hence $R = Q$, hence $\block{k+1}{d_1,r_1} = R = Q = \block{k+1}{d_1,q_0}$.  It follows that $(d_1,r_1)$ and $(d_1,q_0)$ can each be partitioned into $2^k-1$ subintervals, each of which is a $(k+1,Q)$-block.  From this, along with \eqref{Eqn:IntervalTheoremCase1No8}, it follows that $P(k,\tree{V},\tree{V},d_1,r_1,d_1,q_0)$, so by the inductive hypothesis, $\left(\opensectree{V}{d_1}{r_1};(d_1,r_1)\right) \eq{k} \left(\opensectree{V}{d_1}{q_0};(d_1,q_0)\right)$.  Property \eqref{Eqn:IntervalTheoremCase1No4} now follows using \eqref{Eqn:IntervalTheoremCase1No3}.
	
	Property \eqref{Eqn:IntervalTheoremCase1No5} follows from \eqref{Eqn:IntervalTheoremCase1No8} again by an application of Lemma \ref{Thm:SideForestEquivalence}.
	
	Finally, observe as in Case 1.1 that $P(k,\tree{U},\tree{V},p_0,c_2,q_0,d_2)$, from which \eqref{Eqn:IntervalTheoremCase1No6} is again obtained using the inductive hypothesis.  This concludes Case 1.2.
	
	By \eqref{Eqn:IntervalTheoremCase1No5} there exists $q \in \cfset{V}{q_0}$ such that
	\begin{equation} \label{Eqn:IntervalTheoremCase1No9}
		\left(\cftree{U}{p_0};p_0,p\right) \eq{k} \left(\cftree{V}{q_0};q_0,q\right).
	\end{equation}
	
	By \eqref{Eqn:IntervalTheoremCase1No4}, \eqref{Eqn:IntervalTheoremCase1No9} and Lemma \ref{Thm:SumsOfTrees},
	\begin{multline}
		\left(\rightclosedsectree{U}{c_1}{p_0};(c_1,p_0),p_0,p\right) \ \cong \ \opensectree{U}{c_1}{p_0} +_{(c_1,p_0)} \left(\cftree{U}{p_0};p_0,p\right) \ \eq{k} \\
		\opensectree{V}{d_1}{q_0} +_{(d_1,q_0)} \left(\cftree{V}{q_0};q_0,q\right) \ \cong \ \left(\rightclosedsectree{V}{d_1}{q_0};(d_1,q_0),q_0,q\right). \label{Eqn:IntervalTheoremCase1No10}
	\end{multline}
	
	Treating $(c_1,p_0)$ and $(d_1,q_0)$ as unary predicates, note that $(c_1,p_0]$ can be defined in $\left(\rightclosedsectree{U}{c_1}{p_0};(c_1,p_0),p_0,p\right)$ by the quantifier-free formula $(x \in (c_1,p_0)) \vee (x = p_0)$, and $(d_1,q_0]$ can be defined in $\left(\rightclosedsectree{V}{d_1}{q_0};(d_1,q_0),q_0,q\right)$ by the quantifier-free formula $(x \in (d_1,q_0)) \vee (x = q_0)$.  It hence follows from \eqref{Eqn:IntervalTheoremCase1No10} that
	\begin{equation} \label{Eqn:IntervalTheoremCase1No11}
		\left(\rightclosedsectree{U}{c_1}{p_0};(c_1,p_0],p\right) \eq{k} \left(\rightclosedsectree{V}{d_1}{q_0};(d_1,q_0],q\right).
	\end{equation}
	
	By \eqref{Eqn:IntervalTheoremCase1No6}, \eqref{Eqn:IntervalTheoremCase1No11} and Lemma \ref{Thm:SumsOfTrees},
	
	\begin{multline}
		\left(\opensectree{U}{c_1}{c_2};(c_1,p_0],p,(p_0,c_2)\right) \ \cong \ \left(\rightclosedsectree{U}{c_1}{p_0};(c_1,p_0],p\right) +_{(c_1,p_0]} \left(\opensectree{U}{p_0}{c_2};(p_0,c_2)\right) \ \eq{k} \\
		\left(\rightclosedsectree{V}{d_1}{q_0};(d_1,q_0],q\right) +_{(d_1,q_0]} \left(\opensectree{V}{q_0}{d_2};(q_0,d_2)\right) \ \cong \ \left(\opensectree{V}{d_1}{d_2};(d_1,q_0],q,(q_0,d_2)\right). \label{Eqn:IntervalTheoremCase1No12}
	\end{multline}
	
	Note that the predicate $(c_1,c_2)$ can be defined in $\left(\opensectree{U}{c_1}{c_2};(c_1,p_0],p,(p_0,c_2)\right)$ by the formula $(x \in (c_1,p_0]) \vee (x \in (p_0,c_2))$, and the predicate $(d_1,d_2)$ can be defined in $\left(\opensectree{V}{d_1}{d_2};(d_1,q_0],q,(q_0,d_2)\right)$ by the formula $(x \in (d_1,q_0]) \vee (x \in (q_0,d_2))$, hence it follows from \eqref{Eqn:IntervalTheoremCase1No12} that
	$$\left(\opensectree{U}{c_1}{c_2};(c_1,c_2),p\right) \ \eq{k} \ \left(\opensectree{V}{d_1}{d_2};(d_1,d_2),q\right),$$
	as required.
	
	 Case 2: $p_0 \in I_3$.  The argument is quite similar to the one used in Case 1; we will outline it, emphasizing how it differs from the argument used in Case 1.  This time there must exist $r_1 \in (-\infty,d_2)$ for which \eqref{Eqn:IntervalTheoremCase1No2} becomes $\left(\tree{U};c_2,p_0\right) \eq{k+2} \left(\tree{V};d_2,r_1\right)$ (focality is not needed here), hence \eqref{Eqn:IntervalTheoremCase1No3} becomes $\left(\opensectree{U}{p_0}{c_2};(p_0,c_2)\right) \eq{k+1} \left(\opensectree{V}{r_1}{d_2};(r_1,d_2)\right)$.  We then exhibit $q_0 \in J_2 \cup J_3$ satisfying \eqref{Eqn:IntervalTheoremCase1No4}, \eqref{Eqn:IntervalTheoremCase1No5} and \eqref{Eqn:IntervalTheoremCase1No6} by considering the following two subcases: (Case 2.1) $r_1 \in J_2 \cup J_3$ and (Case 2.2) $r_1 \not\in J_2 \cup J_3$.
	
	In Case 2.1, again take $q_0$ to be $r_1$.  Arguments similar to those used in Case 1.1 can be used to show that \eqref{Eqn:IntervalTheoremCase1No4}, \eqref{Eqn:IntervalTheoremCase1No5} and \eqref{Eqn:IntervalTheoremCase1No6} hold.
	
	In Case 2.2, use the fact that $\block{k+2}{I_3} = \block{k+2}{J_2} = N$ in order to find $q_0 \in J_2$ that satisfies \eqref{Eqn:IntervalTheoremCase1No8}.  Then $P(k,\tree{U},\tree{V},c_1,p_0,d_1,q_0)$, and the inductive hypothesis can be used to establish \eqref{Eqn:IntervalTheoremCase1No4}.  Property \eqref{Eqn:IntervalTheoremCase1No5} again follows from \eqref{Eqn:IntervalTheoremCase1No8} using Lemma \ref{Thm:SideForestEquivalence}.  To obtain \eqref{Eqn:IntervalTheoremCase1No6}, assume that $\block{k+1}{p_0,c_2} = R$ in order to deduce that $\block{k+1}{r_1,d_2} = R$.  Since $r_1 \not\in J_2 \cup J_3$, it again follows that $R = Q$, hence $\block{k+1}{r_1,d_2} = R = Q = \block{k+1}{q_0,d_2}$, so the intervals $(r_1,d_2)$ and $(q_0,d_2)$ can each be partitioned into $2^k-1$ subintervals, each of which is a $(k+1,Q)$-block.  From this we get $P(k,\tree{V},\tree{V},r_1,d_2,q_0,d_2)$ so by the inductive hypothesis, $\left(\opensectree{V}{r_1}{d_2};(r_1,d_2)\right) \eq{k} \left(\opensectree{V}{q_0}{d_2};(q_0,d_2)\right)$, hence also $\left(\opensectree{U}{p_0}{c_2};(p_0,c_2)\right) \eq{k} \left(\opensectree{V}{q_0}{d_2};(q_0,d_2)\right)$.  This establishes \eqref{Eqn:IntervalTheoremCase1No6}.
	
	Again there exists a node $q \in \cfset{V}{q_0}$ that satisfies \eqref{Eqn:IntervalTheoremCase1No9}, from which \eqref{Eqn:IntervalTheoremCase1No11} and \eqref{Eqn:IntervalTheoremCase1No12} can again be shown to hold.  As in Case 1, it then follows that\break $\left(\opensectree{U}{c_1}{c_2};(c_1,c_2),p\right) \ \eq{k} \ \left(\opensectree{V}{d_1}{d_2};(d_1,d_2),q\right)$.
\end{proof}

\begin{corollary} \label{Thm:IntervalCorollary}
	Let $n$ be a natural number and suppose that $P(n+1,\tree{T},\tree{S},a_1,a_2,b_1,b_2)$ from the proof of Lemma \ref{Thm:IntervalTheorem} holds.  Then $\left(\rightclosedsectree{T}{a_1}{a_2};(a_1,a_2]\right) \eq{n} \left(\rightclosedsectree{S}{b_1}{b_2};(b_1,b_2]\right)$.
\end{corollary}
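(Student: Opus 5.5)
The plan is to combine Lemma \ref{Thm:IntervalTheorem} with Proposition \ref{Thm:ClosedSection}. Proposition \ref{Thm:ClosedSection} needs two inputs at level $n$: equivalence of the open cross sections with their predicates, and $(\tree{T};a_2) \eq{n+1} (\tree{S};b_2)$.

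\textbf{Step 1: the endpoint condition.} From $P(n+1,\tree{T},\tree{S},a_1,a_2,b_1,b_2)$, hypothesis (III) gives $(\tree{T};a_2) \eq{n+3} (\tree{S};b_2)$. Since $n+3 \geqslant n+1$, this yields $(\tree{T};a_2) \eq{n+1} (\tree{S};b_2)$.

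\textbf{Step 2: the open cross section.} Lemma \ref{Thm:IntervalTheorem} applied at level $n+1$ gives
$$\left(\opensectree{T}{a_1}{a_2};(a_1,a_2)\right) \eq{n+1} \left(\opensectree{S}{b_1}{b_2};(b_1,b_2)\right),$$
and $n+1$-equivalence implies $n$-equivalence.

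\textbf{Step 3: conclude.} Both hypotheses of Proposition \ref{Thm:ClosedSection} now hold with parameter $n$. That proposition gives $\left(\rightclosedsectree{T}{a_1}{a_2};(a_1,a_2]\right) \eq{n} \left(\rightclosedsectree{S}{b_1}{b_2};(b_1,b_2]\right)$, as required.

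I expect no real obstacle. The one point to check is the bookkeeping of quantifier ranks: the shift from $n+1$ in $P$ to the $n+2$ appearing in (III) and (IV) must leave enough room for both Lemma \ref{Thm:IntervalTheorem} and Lemma \ref{Thm:SideForestEquivalence}, the latter being used inside Proposition \ref{Thm:ClosedSection}. As shown above, there is in fact more room than needed.
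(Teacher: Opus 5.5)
Your proof is correct and is exactly the paper's argument, which consists solely of citing Lemma \ref{Thm:IntervalTheorem} and Proposition \ref{Thm:ClosedSection}. You have correctly filled in the rank bookkeeping: the Lemma at level $n+1$ gives $\eq{n+1}$ (hence $\eq{n}$) for the open cross sections, and (III) gives $(\tree{T};a_2) \eq{n+3} (\tree{S};b_2)$, hence $\eq{n+1}$. Your remark about slack is also right, since the same two citations show that $P(n,\tree{T},\tree{S},a_1,a_2,b_1,b_2)$ would already suffice for the stated conclusion.
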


\begin{proof}
	By Lemma \ref{Thm:IntervalTheorem} and Proposition \ref{Thm:ClosedSection}.
\end{proof}

\section{Adding leaves to unbounded paths}  \label{Sec:Leaves}

\begin{proposition} \label{Thm:EquivalentPaths}
	Let $n$ be a natural number.  Let $\tree{T}$ and $\tree{S}$ be $\lang$-trees that are ideal, monofolic, well-founded and focal.  If $A$ is a path in $\tree{T}$, and $B$ is a path in $\tree{S}$, such that $A$ and $B$ have the same $(n+3)$-spectra in $\tree{T}$ and $\tree{S}$ respectively, then $(\tres{T}{\ell(A)}; A^-) \eq{n} (\tres{S}{\ell(B)}; B^-)$.
\end{proposition}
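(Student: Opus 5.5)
We will prove the statement by an Ehrenfeucht--Fra\"iss\'e argument, decomposing each tree along its path into a ``bottom'' closed cross section and a ``tail''. Write $X$ for the common $(n+3)$-spectrum. By the definition of spectrum there is a node $a_0 \in A^-$ such that every $t \in A^-$ with $t \gr{T} a_0$ satisfies some $\tau^{n+3}_i$ with $i \in X$, and such that each $i \in X$ is realised cofinally above any point. Choose $b_0 \in B^-$ in the same way. Since the trees are ideal and well-founded, the leafless parts $A^-$ and $B^-$ have no greatest element. Therefore we can pick cofinal increasing sequences along $A^-$ and $B^-$, and consecutive members of these sequences bound $(n+2,X)$-blocks (refined via \eqref{Eqn:Block} to the appropriate lower-rank blocks).

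The main step is to show that, for any $a_1 <a_2$ in $A^-$ above $a_0$ and $b_1<b_2$ in $B^-$ above $b_0$ with $(\tree{T};a_j) \eq{n+2} (\tree{S};b_j)$, the intervals $(a_1,a_2)$ and $(b_1,b_2)$ can each be partitioned into $2^{n}-1$ subintervals that are $(n+1,M)$-blocks for a common $M$. We obtain this by going far enough up the path that each colour in $X$ occurs between consecutive cut points; the common $M$ will be $X$ itself, after projecting to rank $n+1$. Corollary \ref{Thm:IntervalCorollary} then gives $\left(\rightclosedsectree{T}{a_1}{a_2};(a_1,a_2]\right) \eq{n} \left(\rightclosedsectree{S}{b_1}{b_2};(b_1,b_2]\right)$. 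For the bottom parts we match $a_0$ with some $b_0$ of the same $(n+3)$-type; this requires adjusting $a_0$ to have a type in $X$, which is possible by cofinality. We then use Lemma \ref{Thm:ConeEquivalence}(iv) to get $\left(\comptree{T}{a_0};(-\infty,a_0]\right)\eq{n}\left(\comptree{S}{b_0};(-\infty,b_0]\right)$.

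With these pieces in hand we play $\ef{n}{(\tres{T}{\ell(A)};A^-)}{(\tres{S}{\ell(B)};B^-)}$. By Proposition \ref{Thm:Partition}, every node lies in the co-forest of a unique node of the path. Removing the leaf is harmless: because the trees are monofolic, the leaf is exactly what would sit above all of $A^-$, and every other node lies in some $\cfset{T}{x}$ with $x\in A^-$. A play therefore consists of nodes located in finitely many cross sections above $a_0$, or in the anticone at $a_0$. When Player I picks a node whose path-anchor lies beyond all current cut points, Player II extends the cut sequence on her side to a corresponding point with equal $(n+2)$-type and uses the cross-section equivalence. To glue the pieces we use Lemma \ref{Thm:SumsOfTrees}, iterated as in the proof of Lemma \ref{Thm:IntervalTheorem}, to combine the anticone part and the cross sections into the whole structure. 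The cleanest formulation is that $(\tres{T}{\ell(A)};A^-)$ is an $\omega$-like (cofinal) sum of $(n,\cdot)$-equivalent closed cross sections, and that the same holds for $\tree{S}$.

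The main obstacle is the cofinality and type-counting step. We must ensure that a single rank loss budget ($n+3 \to n+2 \to n+1 \to n$) suffices both for choosing matching cut points of equal $(n+2)$-type and for guaranteeing the $2^n-1$ block decomposition with a common set $M$. We must also ensure that the lengths of the two cofinal sequences, which may have different cofinalities, do not matter in an $n$-round game. We handle this by noting that only $n$ nodes are ever played, so at most $n$ cross sections are ever activated, and that any tail cross section $\rightclosedsectree{T}{a}{a'}$ with both endpoints of type in $X$ is interchangeable with any other such section by Corollary \ref{Thm:IntervalCorollary}. This makes the proof an induction on the number of rounds remaining, with a Case 1/Case 2 split mirroring Lemma \ref{Thm:IntervalTheorem}, according to whether the new anchor lies below or above the previously chosen cut points.
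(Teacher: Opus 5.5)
Your route is the paper's: cut points in $A^-$ and $B^-$ of matching high type with block-partitioned gaps, Lemma~\ref{Thm:IntervalTheorem} (via Corollary~\ref{Thm:IntervalCorollary}) for the closed cross sections, Lemma~\ref{Thm:ConeEquivalence} for the closed anticone at the first cut point, monofolicity to dispose of the leaf, and an Ehrenfeucht--Fra\"iss\'e strategy running the finitely many activated subgames side by side. The ingredients are right. However, several of the claims that are meant to carry the bookkeeping are false as written.

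(a) The ``main step'' cannot hold for \emph{any} $a_1<a_2$ above $a_0$. Since $\tree{T}$ is well-founded and ideal, $a_1$ has an immediate successor $a_2\in A^-$, and then $(a_1,a_2)=\emptyset$. Nor does ``each colour of $X$ occurs between consecutive cut points'' suffice. That yields one block per gap, whereas hypothesis (IV) of Lemma~\ref{Thm:IntervalTheorem} asks for $2^n-1$, and a newly opened section may span a single gap. Require instead $2^n-1$ successive segments between consecutive cut points, each realising every colour of $X$. Since every node of $A^-$ above $a_0$ has colour in $X$, every interval between two cut points then splits into $2^n-1$ blocks, with the same $M$ on both sides by \eqref{Eqn:Block}.

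(b) ``Any tail section whose endpoints have type in $X$ is interchangeable with any other'' is false. $X$ may contain several types, and Lemma~\ref{Thm:IntervalTheorem} needs the endpoints matched pairwise. The paper fixes one type of the common $(n+3)$-spectrum and lets \emph{all} cut points on both sides realise it. Then every section of $\tree{T}$ between two cut points is $n$-equivalent to every such section of $\tree{S}$, Player II simply takes the next cut point $s_{m+1}$, and differing cofinalities become irrelevant. Your pairwise matching of each new cut point also works, but then the over-claim should go.

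(c) Corollary~\ref{Thm:IntervalCorollary} needs $P(n+1,\dots)$. Under your ranks ($\eq{n+2}$ endpoints, $2^n-1$ blocks of rank $n+1$, i.e.\ $P(n,\dots)$) it gives only $\eq{n-1}$. Apply Lemma~\ref{Thm:IntervalTheorem} and then Proposition~\ref{Thm:ClosedSection} directly; these do give $\eq{n}$ for the closed sections.

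(d) Iterating Lemma~\ref{Thm:SumsOfTrees} assembles only finitely many summands, but $A^-$ carries infinitely many sections. You have two options. One is to glue by the composite strategy, noting that comparability between pieces and membership in $A^-$ are determined by the predicates $(-\infty,a_0]$ and $(x,x']$; the paper does this, with no induction on rounds. The other is to state your induction explicitly for the tails above matched cut points, rather than for the whole structures.
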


\begin{proof}
	Observe that the ($n+2$)-spectrum of $A$ in $\tree{T}$ equals the ($n+2$)-spectrum of $B$ in $\tree{S}$; let this spectrum be $C$.  Since $A$ and $B$ have the same $(n+3)$-spectra in $\tree{T}$ and $\tree{S}$ respectively, there exists a sequence $(t_k)_{k \in \alpha}$ that is cofinal in $A^-$, and a sequence $(s_k)_{k \in \beta}$ that is cofinal in $B^-$, such that
	\begin{enumerate}[(i)]
		\item
			$(\tree{T};t_i) \eq{n+3} (\tree{S};s_j)$ for all $i \in \alpha$ and $j \in \beta$, and
		\item
			for all $i,j \in \alpha$ and $k,m \in \beta$ with $i < j$ and $k < m$, each of the intervals $(t_i,t_j)$ and $(s_k,s_m)$ can be partitioned into $2^n-1$ subintervals, with each of these subintervals being an $(n+2,C)$-block.
	\end{enumerate}
	By Corollary \ref{Thm:IntervalCorollary}, $\left(\rightclosedsectree{T}{t_i}{t_j};(t_i,t_j]\right) \eq{n} \left(\rightclosedsectree{S}{s_k}{s_m};(s_k,s_m]\right)$ for all $i,j \in \alpha$ and $k,m \in \beta$ with $i < j$ and $k < m$, and by Lemma \ref{Thm:ConeEquivalence}, $\left(\comptree{T}{t_0};(-\infty,t_0]\right) \eq{n} \left(\comptree{S}{s_0};(-\infty,s_0]\right)$.  We will show that $(\tres{T}{\ell(A)}; A^-) \eq{n} (\tres{S}{\ell(B)}; B^-)$ by describing a winning strategy for Player II for the game $\ef{n}{(\tres{T}{\ell(A)}; A^-)}{(\tres{S}{\ell(B)}; B^-)}$.
	
	\textbf{Round 1:}  Set $x_0 := t_0$ and $y_0 := s_0$.  Suppose that Player I chooses the node $a_1 \in T \setminus \ell(A)$ for his first move (the case in which he instead chooses $b_1 \in S \setminus \ell(B)$ is similar).
	\begin{itemize}
		\item
			Case 1: $a_1 \in \compset{T}{t_0}$.  Then Player II uses her winning strategy for the game $\ef{n}{\left(\comptree{T}{t_0};(-\infty,t_0]\right)}{\left(\comptree{S}{s_0};(-\infty,s_0]\right)}$ to choose a node $b_1 \in \compset{S}{s_0}$ as her response, and we set $x_1 := t_0$ and $y_1 := s_0$.
		\item
			Case 2: $a_1 \not\in \compset{T}{t_0}$.  Let $p \in \alpha$ be the smallest value for which $a_1 \in \compset{T}{t_p}$.  Player II then chooses a node $b_1 \in \rightclosedsecset{S}{s_0}{s_1}$ as her response using her winning strategy for the game $\ef{n}{\left(\rightclosedsectree{T}{t_0}{t_p};(t_0,t_p]\right)}{\left(\rightclosedsectree{S}{s_0}{s_1};(s_0,s_1]\right)}$.  Set $x_1 := t_p$ and $y_1 := s_1$.
	\end{itemize}
	
	\textbf{Round $k+1$:}  Now let $1 \leqslant k < n$ and suppose that the nodes $a_1,\ldots,a_k \in T \setminus \ell(A)$ and $b_1,\ldots,b_k \in S \setminus \ell(B)$ were chosen in the first $k$ rounds of the game, and that $X := \{x_0,\ldots,x_k\}$ and $Y := \{y_0,\ldots,y_k\}$ have been defined such that for all $i$ and $j$,
	\begin{enumerate}[(i)]
		\item
			$x_i \in \{t_m : m \in \alpha\}$ and $y_i \in \{s_m : m \in \beta\}$, and
		\item
			$x_i \ls{T} x_j$ if and only if $y_i \ls{S} y_j$, and
		\item
			$a_i \in \compset{T}{x_i}$ and $b_i \in \compset{T}{y_i}$, while
		\item
			$a_i \not\in \compset{T}{x}$ whenever $x \in X$ with $x \ls{T} x_i$, and $b_i \not\in \compset{S}{y}$ whenever $y \in Y$ with $y \ls{S} y_i$.
	\end{enumerate}
	Suppose that for his ($k+1$)-th move, Player I chooses the node $a_{k+1} \in T \setminus \ell(A)$ (again the case in which he instead chooses $b_{k+1} \in S \setminus \ell(B)$ is similar).  We consider three cases:
	\begin{itemize}
		\item
			Case 1: $a_{k+1} \in \compset{T}{t_0}$.  Player II responds with $b_{k+1} \in \compset{S}{s_0}$ using her winning strategy for the game $\ef{n}{\left(\comptree{T}{t_0};(-\infty,t_0]\right)}{\left(\comptree{S}{s_0};(-\infty,s_0]\right)}$ and based upon the nodes that have already been played in the sets $\compset{T}{t_0}$ and $\compset{S}{s_0}$.  Let $x_{k+1} := t_0$ and $y_{k+1} := s_0$.
		\item
			Case 2: $a_{k+1} \in \rightclosedsecset{T}{x}{x'}$ for some $x,x' \in X$ with $x < x'$.  Let $l,u \in \{0,\ldots,k\}$ be any integers such that $x_l$ is the largest node in $X$, and $x_u$ is the smallest node in $X$, for which $a_{k+1} \in \rightclosedsecset{T}{x_l}{x_u}$.  Then Player II responds with $b_{k+1} \in \rightclosedsecset{S}{y_l}{y_u}$ using her winning strategy for the game $\ef{n}{\left(\rightclosedsectree{T}{x_l}{x_u};(x_l,x_u]\right)}{\left(\rightclosedsectree{S}{y_l}{y_u};(y_l,y_u]\right)}$ and based upon the nodes that have already been played in the sets $\rightclosedsecset{T}{x_l}{x_u}$ and $\rightclosedsecset{S}{y_l}{y_u}$.  Let $x_{k+1} := x_u$ and $y_{k+1} := y_u$.
		\item
			Case 3: $a_{k+1} > x$ for all $x \in X$.  Let $i \in \{0,\ldots,k\}$ be any integer such that $x_i$ is the largest node in $X$.  Suppose that $x_i$ is the node $t_j$, and suppose that $y_i$ is the node $s_m$.  Let $q \in \alpha$ be the smallest value for which $a_{k+1} \in \compset{T}{t_q}$.  Player II then chooses a node $b_{k+1} \in \rightclosedsecset{S}{s_m}{s_{m+1}}$ as her response using her winning strategy for the game $\ef{n}{\left(\rightclosedsectree{T}{t_j}{t_q};(t_j,t_q]\right)}{\left(\rightclosedsectree{S}{s_m}{s_{m+1}};(s_m,s_{m+1}]\right)}$ and based upon the nodes that have already been played in the sets $\rightclosedsecset{T}{t_j}{t_q}$ and $\rightclosedsecset{S}{s_m}{s_{m+1}}$.  Let $x_{k+1} := t_q$ and $y_{k+1} := s_{m+1}$.
	\end{itemize}
	It then follows that $\{(a_1,b_1),\ldots,(a_n,b_n)\}$ forms a local isomorphism between the trees $(\tres{T}{\ell(A)}; A^-)$ and $(\tres{S}{\ell(B)}; B^-)$, as required.
\end{proof}

\begin{proposition} \label{Thm:AddSingleLeaf}
	Let $\tree{T}$ be a tree that is ideal, monofolic, well-founded, focal and variegated.  Let $A$ be a leafless path in $\tree{T}$ and let $\tree{E} := (\{e\};\emptyset)$ be the trivial tree.  Then $\tree{T} \preceq \sumtree{T}{E}{A}$ and $\sumtree{T}{E}{A}$ is variegated.
\end{proposition}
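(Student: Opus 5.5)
We prove $\tree{T} \preceq \sumtree{T}{E}{A}$ with the Tarski--Vaught test, in its $n$-equivalence form. Write $\tree{T}' := \sumtree{T}{E}{A}$. It suffices to show that for each $n$ and each tuple $\bar{a}$ from $T$, $(\tree{T};\bar{a}) \eq{n} (\tree{T}';\bar{a})$. The idea is to trade the new leaf $e$, which sits on top of the leafless path $A$, for a genuine leaf of $\tree{T}$. Variegation supplies this: the $(n+3)$-spectrum $X$ of $A$ in $(\tree{T};\bar{a})$ is the spectrum of an open path, so there is a closed path $C$ in $\tree{T}$ with the same $(n+3)$-spectrum in $(\tree{T};\bar{a})$. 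Let $c$ be the leaf of $C$.

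The main step applies Proposition \ref{Thm:EquivalentPaths} to the structures $(\tree{T};\bar{a})$ and $(\tree{T};\bar{a})$ with the paths $A$ and $C$. The properties ideal, monofolic, well-founded and focal persist after adding constants, because focality and the other properties quantify over arbitrary formulas with parameters. The proposition gives
$$(\tres{T}{\ell(A)};\bar{a},A^-) \eq{n} (\tres{T}{c};\bar{a},C^-),$$
and here $\ell(A)=\emptyset$. We need the constants $\bar{a}$ to avoid $c$. If some $a_i = c$, we instead choose $C$ among closed paths whose leaf is not in $\bar{a}$. Idealness and the cofinality of spectra suggest there are many such paths, but this needs a small extra argument. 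The fallback is to treat a constant equal to $c$ separately, since it is then the unique leaf above $C^-$.

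Now glue the trivial tree on top. By Lemma \ref{Thm:SumsOfTrees}, with $\tree{E}\eq{n}\tree{E}$,
$$(\tree{T}';\bar{a}) \cong (\tree{T};\bar{a}) +_A \tree{E} \eq{n} (\tres{T}{c};\bar{a}) +_{C^-} \tree{E} \cong (\tree{T};\bar{a}).$$
The last isomorphism uses monofolicity: $c$ is exactly a single node placed above all of $C^-$ and above nothing else. Lemma \ref{Thm:SumsOfTrees} is stated for paths, and $C^-$ is a path of $\tres{T}{c}$ because $C$ is closed and idealness means $c$ is not an immediate successor. This gives $\tree{T}\preceq\tree{T}'$. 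The bookkeeping of the spectrum index is the step that needs the most care: Proposition \ref{Thm:EquivalentPaths} must be applied in the language $\lang(\tree{T};\bar{a})$, with $n$ large enough for the formula being tested.

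For variegation of $\tree{T}'$, take an open path $B$ of $\tree{T}'$, a tuple $\bar{b}$ from $T'$, and $n$. The open paths of $\tree{T}'$ are exactly the open paths of $\tree{T}$ other than $A$. If $\bar{b}$ avoids $e$, elementarity shows that $n$-characteristic types of nodes of $T$ computed in $(\tree{T}';\bar{b})$ agree with those in $(\tree{T};\bar{b})$. The spectrum of $B$ therefore agrees in both structures, so variegation of $\tree{T}$ yields a closed path of $\tree{T}$, hence of $\tree{T}'$, with the same spectrum. If $e$ occurs in $\bar{b}$, I would replace $e$ by the leaf $c$ chosen as above with the enlarged spectrum index. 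By the displayed equivalence, $(\tree{T}';\bar{b}) \eq{m} (\tree{T};\bar{b}[c/e])$ through a game that fixes the nodes of $T$ outside $A\cup C$. It then remains to reduce to the previous case.

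The main obstacle I expect is this last transfer: checking that spectra of a path $B$ that avoids $A$ and $C$ are preserved under this replacement. I would try a variant of the game in Proposition \ref{Thm:EquivalentPaths} that fixes the part of the tree off $A\cup C$.
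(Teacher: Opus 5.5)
Your first half is correct and is essentially the paper's argument. Variegation gives a closed path $C$ with leaf $c$ and the same spectrum as $A$ in $(\tree{T};\bar a)$. Proposition \ref{Thm:EquivalentPaths} then gives $(\tree{T};\bar a;A)\eq{n}(\tres{T}{c};\bar a;C^-)$, and Lemma \ref{Thm:SumsOfTrees} glues the leaf back on. Reading off $(\sumtree{T}{E}{A};\bar a)\eq{n}(\tree{T};\bar a)$ directly, instead of transferring the witness $e$ to $c$ as the paper does, is fine and slightly more economical.

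Your worry that $c$ might occur in $\bar a$ is unfounded. Spectra are computed in $\lang(\tree{T};\bar a)$, so they record whether $x<a_i$. By maximality of the leafless path $A$, cofinally many nodes of $A$ lie below no $a_i$, whereas every node of $C^-$ lies below $c$. So equal spectra already force $c\notin\bar a$.

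Two smaller corrections:
\begin{enumerate}[(i)]
\item It is monofolicity, not idealness, that makes $C^-$ a path of $\tres{T}{c}$. The isomorphism $e\mapsto c$ needs only that $c$ is a leaf with $(-\infty,c)=C^-$.
\item Focality is \emph{not} preserved by naming constants. With $\varphi(x,y):=(y=a_1)$ it would force every node above any $x\leqslant a_1$ to be comparable with $a_1$. The paper applies Proposition \ref{Thm:EquivalentPaths} to $(\tree{T};\bar a)$ just as tacitly. What makes this legitimate is that the common type of the cofinal nodes $t_k,s_k$ places every constant outside the cones above $t_0,s_0$, which is where focality is used.
\end{enumerate}

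The genuine gap is the variegation of $\sumtree{T}{E}{A}$ when $e$ is among the parameters. You leave this case open, and the route you sketch would not close it. An $m$-equivalence of the whole structures $(\sumtree{T}{E}{A};\bar b)$ and $(\tree{T};\bar b[c/e])$ says nothing about the types of the particular nodes of the non-definable path $B$. To transfer a spectrum you would need $(\sumtree{T}{E}{A};\bar b,t)\eq{k}(\tree{T};\bar b[c/e],t)$ for cofinally many $t\in B$. The strategy of Proposition \ref{Thm:EquivalentPaths} does not fix nodes: it maps the anticone of $t_0$ to that of $s_0$, and cross sections along $A$ to cross sections along $C$. Even with such pointwise control, you would still have to transfer the spectrum of the closed path found in $\tree{T}$ back again.

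The missing idea, which is how the paper argues, is to localize rather than replace $e$:
\begin{enumerate}[(i)]
\item Since $B$ is leafless and $B\neq A$, there is $d\in B$ such that neither $e$ nor any entry of $\bar b$ lies in the cone above $d$. That cone is the same in $\tree{T}$ and in $\sumtree{T}{E}{A}$.
\item Variegation of $\tree{T}$ with the single parameter $d$ gives a leaf $f$ such that $(-\infty,f]$ has the same $k$-spectrum as $B$ in $(\tree{T};d)$. This forces $f>d$.
\item Take $t,s\geqslant d$ with $(\tree{T};d,t)\eq{k}(\tree{T};d,s)$. Lemma \ref{Thm:ConeEquivalence} makes the cones above $d$, with $t$ resp.\ $s$ marked, $k$-equivalent.
\item Decompose $\sumtree{T}{E}{A}$ into the open anticone of $d$ and the closed cone at $d$. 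The anticone carries all of $\bar b$, including $e$, and is identical on both sides. Lemma \ref{Thm:SumsOfTrees} then yields $(\sumtree{T}{E}{A};\bar b,t)\eq{k}(\sumtree{T}{E}{A};\bar b,s)$.
\end{enumerate}
Thus the type of a node above $d$ in $(\sumtree{T}{E}{A};\bar b)$ is a function of its type in $(\tree{T};d)$. Equal spectra in $(\tree{T};d)$ therefore give equal spectra in $(\sumtree{T}{E}{A};\bar b)$. This treats all parameter tuples uniformly, so no case split on $e$ and no replacement of $e$ by $c$ is needed.
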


\begin{proof}
	Assume without loss of generality that none of the nodes in $T$ are labelled $e$.  We first use the Tarski-Vaught Test to show that $\tree{T} \preceq \sumtree{T}{E}{A}$.  To this end, let $\bar{a}$ be a tuple of nodes in $T$ and let $\varphi(x,\bar{z})$ be a formula of quantifier rank $n$ for which $(\sumtree{T}{E}{A};\bar{a}) \models \varphi(e,\bar{a})$.  Since $\tree{T}$ is variegated, there exists a leaf $b \in T$ such that the path $(-\infty,b]$ has the same ($n+4$)-spectrum as the path $A$ in $(\tree{T};\bar{a})$.  By Proposition \ref{Thm:EquivalentPaths}, $(\tree{T}; \bar{a}; A) \eq{n+1} (\tres{T}{b}; \bar{a}; (-\infty,b))$.  Let $\tree{B} := (\{b\};\emptyset)$ be another copy of the trivial tree.  By Lemma \ref{Thm:SumsOfTrees},
	\begin{multline} \nonumber
		(\sumtree{T}{E}{A}; \bar{a}; A) \cong ((\tree{T};\bar{a}) +_A \tree{E}; A) \eq{n+1} \left((\tres{T}{b};\bar{a}) +_{(-\infty,b)} \tree{B}; (-\infty,b)\right) \cong \\ \left((\tres{T}{b}) +_{(-\infty,b)} \tree{B}; \bar{a}; (-\infty,b)\right) \cong (\tree{T}; \bar{a}; (-\infty,b)).
	\end{multline}
	It now follows that $(\sumtree{T}{E}{A};\bar{a},e) \eq{n} (\tree{T};\bar{a},b)$ hence $(\tree{T};\bar{a}) \models \varphi(b,\bar{a})$, as required.

	To see that $\sumtree{T}{E}{A}$ is variegated, let $C$ be an open path in $\sumtree{T}{E}{A}$ and let $\bar{c}$ be a tuple of nodes from $T \cup \{e\}$.  Choose $d \in C$ such that $e$, and all the nodes from $\bar{c}$, belong to $\compeqset{T}{d}$.  Let $X$ be the $k$-spectrum of $C$ in $(\tree{T};d)$.  Since $\tree{T}$ is variegated, there exists a leaf $f \in T$ such that the path $(-\infty,f]$ also has $k$-spectrum $X$ in $(\tree{T};d)$.  Observe that for all $t \in \greqset{T}{d} \cap C$ and $s \in \greqset{T}{d} \cap (-\infty,f)$, if $(\tree{T};d,t) \eq{k} (\tree{T};d,s)$ then by Lemma \ref{Thm:ConeEquivalence},
	$$\left(\left( \sumtree{T}{E}{A} \right)^{\geqslant d}; t\right) \cong (\greqtree{T}{d};t) \eq{k} (\greqtree{T}{d};s) \cong \left(\left( \sumtree{T}{E}{A} \right)^{\geqslant d}; s\right),$$
	and trivially $\left(\left(\sumtree{T}{E}{A}\right)^{\not\geqslant d}; \bar{c}; (-\infty,d)\right) \eq{k} \left(\left(\sumtree{T}{E}{A}\right)^{\not\geqslant d}; \bar{c}; (-\infty,d)\right)$, so by Lemma \ref{Thm:SumsOfTrees},
	\begin{multline} \nonumber
		(\sumtree{T}{E}{A};\bar{c},t) \cong \left(\left(\sumtree{T}{E}{A}\right)^{\not\geqslant d}; \bar{c}\right) +_{(-\infty,d)} \left(\left( \sumtree{T}{E}{A} \right)^{\geqslant d}; t\right) \ \eq{k} \\ \left(\left(\sumtree{T}{E}{A}\right)^{\not\geqslant d}; \bar{c}\right) +_{(-\infty,d)} \left(\left( \sumtree{T}{E}{A} \right)^{\geqslant d}; s\right) \cong (\sumtree{T}{E}{A};\bar{c},s).
	\end{multline}
	It follows that the paths $C$ and $(-\infty,f]$ have the same $k$-spectrum in $(\sumtree{T}{E}{A};\bar{c})$, as required.
\end{proof}

\begin{theorem} \label{Thm: Embedding}
	Let $\tree{T}$ be a tree that is ideal, monofolic, well-founded, focal and variegated.  Well-order the set of leafless paths in $\tree{T}$ as $\mathcal{P} := \{ A_i : i \in \alpha \}$ for some limit ordinal $\alpha$.  Let $\tree{E}$ be the trivial tree.  For each $i \in \alpha$, define $\tree{T}_i$ as follows:
	\begin{itemize}
		\item
			$\tree{T}_0 := \tree{T}$;
		\item
			for each $j \in \alpha$, $\tree{T}_{j+1} := \tree{T}_j +_{A_j} \tree{E}$;
		\item
			for each limit ordinal $\gamma \in \alpha$, $\tree{T}_{\gamma} := \bigcup_{j \in \gamma} \tree{T}_j$.
	\end{itemize}
	Then
	\begin{enumerate}[(i)]
		\item
			$\tree{T}_i \preceq \tree{T}_j$ for all $i,j \in \alpha$ with $i < j$, and
		\item
			$\tree{T}_i$ is variegated for each $i \in \alpha$.
	\end{enumerate}
	Moreover, $\tree{T} \preceq \sumtree{T}{E}{\mathcal{P}}$.
\end{theorem}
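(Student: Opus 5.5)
The plan is a transfinite induction on $i \in \alpha$ that proves (i) and (ii) together. Proposition \ref{Thm:AddSingleLeaf} will handle successor steps, and the elementary chain theorem (Tarski) will handle limits. The inductive hypothesis at stage $j$ will be the conjunction of three statements: $\tree{T}_j$ is ideal, monofolic, well-founded, focal and variegated; $\tree{T}_i \preceq \tree{T}_j$ for all $i < j$; and each $A_i$ with $i \geq j$ is still a leafless path of $\tree{T}_j$. The last two statements should be routine, since $\tree{T}_j$ is an end-extension of $\tree{T}$ in which only leaves have been added. A leaf $e_i$ added on top of $A_i$ lies above exactly the nodes of $A_i$. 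For $i' \neq i$ the paths $A_{i'}$ and $A_i$ differ, so the set $A_{i'}$ is not contained in $A_i$, and no added leaf lies above all of $A_{i'}$. It follows that $A_{i'}$ stays maximal and leafless, and the construction $\tree{T}_{j+1} := \tree{T}_j +_{A_j} \tree{E}$ is well defined.

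At a successor step, Proposition \ref{Thm:AddSingleLeaf} gives $\tree{T}_j \preceq \tree{T}_{j+1}$ and that $\tree{T}_{j+1}$ is variegated. Composing with the hypothesis gives $\tree{T}_i \preceq \tree{T}_{j+1}$ for all $i \leq j$. It remains to check that the other four properties survive, and here I would use elementarity. Ideal, monofolic and focal are each given by first-order sentences or axiom schemes (see the displayed formulas in Section \ref{Sec:Properties}). They therefore transfer from $\tree{T}_j$ to any elementary extension, in particular to $\tree{T}_{j+1}$. Well-foundedness is not first-order, so I would check it directly. The tree $\tree{T}_{j+1}$ only adds the leaf $e$, every descending chain through $e$ continues inside $T_j$, and so no infinite descending chain can arise.

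At a limit $\gamma$, $\tree{T}_\gamma$ is the union of an elementary chain. Tarski's elementary chain theorem then gives $\tree{T}_i \preceq \tree{T}_\gamma$ for every $i < \gamma$, and the first-order properties transfer as before. Well-foundedness again holds because every node outside $T$ is a leaf whose predecessors all lie in $T$. The final claim $\tree{T} \preceq \sumtree{T}{E}{\mathcal{P}}$ follows in the same way. I read $\sumtree{T}{E}{\mathcal{P}}$ as $\bigcup_{i \in \alpha}\tree{T}_i$, which is isomorphic to $\tree{T}$ with a separate leaf placed above each $A_i$. Applying the elementary chain theorem to the whole chain gives the claim, because $\alpha$ is a limit ordinal.

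The hard step is preserving variegation at limit stages, since that property is not first-order and Proposition \ref{Thm:AddSingleLeaf} does not apply there. My approach would be as follows. Let $C$ be an open path of $\tree{T}_\gamma$ and $\bar{c}$ a tuple. Then $\bar{c}$ lies in some $\tree{T}_j$ with $j < \gamma$, and $C$ minus any added leaves is an open path of $\tree{T}_j$. Because $\tree{T}_j \preceq \tree{T}_\gamma$ and $C^-$ is unchanged, the $n$-spectrum of $C$ computed in $(\tree{T}_j;\bar{c})$ equals the one computed in $(\tree{T}_\gamma;\bar{c})$: characteristic formulas are evaluated the same way in both trees. Variegation of $\tree{T}_j$ then supplies a closed path $(-\infty,f]$ in $\tree{T}_j$ with the same spectrum. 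This path stays closed in $\tree{T}_\gamma$ and keeps the same spectrum by the same elementarity argument. This limit-step argument is also the template I would fall back on if the successor-stage transfer needed more care.
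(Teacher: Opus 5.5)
Your proof is correct and follows essentially the same route as the paper. Both use induction, apply Proposition \ref{Thm:AddSingleLeaf} at successor stages and Tarski's elementary chain theorem for (i) and the final claim, and at limit stages move the spectrum of an open path down to a stage $\tree{T}_\gamma$ containing the parameters and back up by elementarity. You are also more careful than the paper, which leaves implicit both the hypotheses Proposition \ref{Thm:AddSingleLeaf} needs at each stage (ideal, monofolic and focal via elementarity, well-founded checked directly) and the fact that each $A_j$ is still a leafless path of $\tree{T}_j$; you check both explicitly.
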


\begin{proof}
	We prove the result using induction.  Let $\beta \in \alpha$ and suppose that (i) and (ii) hold for all $i$ and $j$ with $i,j < \beta$.  It suffices to show that $\tree{T}_i \preceq \tree{T}_{\beta}$ for each $i < \beta$, and $\tree{T}_{\beta}$ is variegated.

	Using Proposition \ref{Thm:AddSingleLeaf} and the Tarski-Vaught theorem on unions of elementary chains, it follows immediately that $\tree{T}_i \preceq \tree{T}_{\beta}$ for each $i < \beta$.

	To see that $\tree{T}_{\beta}$ is variegated, let $A$ be an open path in $\tree{T}_{\beta}$, let $\bar{a}$ be a tuple of nodes in $T_{\beta}$, and let $X$ be the $k$-spectrum of $A$ in $(\tree{T}_{\beta};\bar{a})$.  If $\beta$ is a successor ordinal, say $\beta = \gamma + 1$, use the fact that $\tree{T}_{\gamma}$ is variegated and apply Proposition \ref{Thm:AddSingleLeaf} to conclude that $\tree{T}_{\beta}$ is variegated.  If, on the other hand, $\beta$ is a limit ordinal, let $\gamma < \beta$ be such that all the nodes in $\bar{a}$ belong to $T_{\gamma}$.  Since $\tree{T}_{\gamma} \preceq \tree{T}_{\beta}$ then for each $t \in A$ it holds that $(\tree{T}_{\gamma};\bar{a},t) \equiv (\tree{T}_{\beta};\bar{a},t)$, so the $k$-spectrum of $A$ in $(\tree{T}_{\gamma};\bar{a})$ is also $X$.  Since $\tree{T}_{\gamma}$ is variegated, there is a leaf $b \in T_{\gamma}$ such that the $k$-spectrum of $(-\infty,b]$ in $(\tree{T}_{\gamma};\bar{a})$ is $X$.  Observe that $b$ is also a leaf in $\tree{T}_{\beta}$.  Again using the fact that $\tree{T}_{\gamma} \preceq \tree{T}_{\beta}$, the $k$-spectrum of $(-\infty,b]$ in $(\tree{T}_{\beta};\bar{a})$ is also $X$, as required.

	Finally, it follows from (i) and the Tarski-Vaught theorem on unions of elementary chains that $\tree{T} \preceq \bigcup_{i \in \alpha} \tree{T}_i \cong \sumtree{T}{E}{\mathcal{P}}$.
\end{proof}

\begin{comment}
	Note that the tree $\sumtree{T}{E}{\mathcal{P}}$ in Theorem \ref{Thm: Embedding} is bounded.  Hence every tree that is ideal, monofolic, well-founded, focal and variegated, can be elementarily embedded in a bounded tree.
\end{comment}

\bibliographystyle{alpha}
\bibliography{trees.bib}{}

\end{document}